\numberwithin{equation}{section}
\definecolor{darkblue}{rgb}{0,0,0.5}
\newdimen\margin
\def\textno#1&#2\par{
   \margin=\hsize
   \advance\margin by -4\parindent
          \setbox1=\hbox{\sl#1}
   \ifdim\wd1 < \margin
      $$\box1\eqno#2$$
   \else
      \bigbreak
      \hbox to \hsize{\indent$\vcenter{\advance\hsize by -3\parindent
      \it\noindent#1}\hfil#2$}
      \bigbreak
   \fi}
\newtheorem{theorem}[algorithm]{Theorem}
\newtheorem{lemma}[algorithm]{Lemma}
\newtheorem{cor}[algorithm]{Corollary}
\theoremstyle{definition}
\def\lateproof#1{\removelastskip\penalty55\medskip\noindent\begin{stepenv}\end{stepenv}{\bf Proof of #1. }} 
\def\noproof{{\unskip\nobreak\hfill\penalty50\hskip2em\hbox{}\nobreak\hfill%
       $\square$\parfillskip=0pt\finalhyphendemerits=0\par}\goodbreak}
\def\endproof{\noproof\bigskip}
\def\claimproof{\removelastskip\penalty55\medskip\noindent{\em Proof of claim: }}
\def\noclaimproof{{\unskip\nobreak\hfill\penalty50\hskip2em\hbox{}\nobreak\hfill%
       $\square$\parfillskip=0pt\finalhyphendemerits=0\par}\goodbreak}
\def\endclaimproof{\noclaimproof\medskip}
\newcounter{stepenv}
\newenvironment{stepenv}[1][]{\refstepcounter{stepenv}}{}
\newcounter{step}[stepenv]
\newcounter{substep}[step]
\renewcommand{\thesubstep}{\thestep.\arabic{substep}}
\newcounter{claim}[stepenv]
\newenvironment{claim}[1][]{\refstepcounter{claim}\par\medskip\noindent%
        \textit{Claim~\theclaim. #1} \itshape\rmfamily}{\medskip}
\newcommand{\cF}{\mathcal{F}}
\newcommand{\cH}{\mathcal{H}}
\newcommand{\cI}{\mathcal{I}}
\newcommand{\cP}{\mathcal{P}}
\newcommand{\cX}{\mathcal{X}}
\newcommand{\cZ}{\mathcal{Z}}
\newcommand{\bG}{\mathbb{G}}
\newcommand{\bN}{\mathbb{N}}
\def\eps{{\epsilon}}
\newcommand{\PP}{\mathbb{P}}
\newcommand{\EE}{\mathbb{E}}
\newcommand{\defn}{\emph}
\def\lflr{\left\lfloor}
\def\rflr{\right\rfloor}
\def\lcl{\left\lceil}
\def\rcl{\right\rceil}
\newcommand{\floor}[1]{\lflr#1\rflr}
\newcommand{\ceil}[1]{\lcl#1\rcl}
\def\sm{\setminus}
\newcommand{\set}[2]{\{#1\,:\;#2\}}
\def\In{\subseteq}
\renewcommand{\eps}{\varepsilon}
\def\COMMENT#1{}
\def\TASK#1{}
\let\TASK=\footnote             
\let\COMMENT=\footnote          
\begin{document}

\title{Defect and transference versions of the Alon--Frankl--Lov\'asz theorem}

\author{Lior Gishboliner\thanks{Department of Mathematics, University of Toronto, Canada.
\emph{Email}: \href{mailto:lior.gishboliner@utoronto.ca}{\tt lior.gishboliner@utoronto.ca}.
}
\and 
Stefan Glock\thanks{Fakultät für Informatik und Mathematik, Universität Passau, Germany.
\emph{Email}: \href{mailto:stefan.glock@uni-passau.de}{\tt stefan.glock@uni-passau.de}, \href{mailto:amedeo.sgueglia@uni-passau.de}{\tt amedeo.sgueglia@uni-passau.de}.
SG is funded by the Deutsche Forschungsgemeinschaft (DFG, German Research Foundation) --- 542321564,
AS is funded by the Alexander von Humboldt Foundation.}
\and 
Peleg Michaeli\thanks{
    Mathematical Institute,
    University of Oxford,
    UK.
    \emph{Email}: \href{mailto:peleg.michaeli@maths.ox.ac.uk}
                {\tt peleg.michaeli@maths.ox.ac.uk}.
    Research supported by ERC Advanced Grant 883810.
    For the purpose of Open Access, the author has applied a CC BY public
    copyright licence to any Author Accepted Manuscript version arising from
    this submission.
}
\and
Amedeo Sgueglia\footnotemark[2]
}

\date{}

\maketitle

\begin{abstract} 
Confirming a conjecture of Erd\H{o}s on the chromatic number of Kneser hypergraphs, Alon, Frankl and Lov\'asz proved that in any $q$-colouring of the edges of the complete $r$-uniform hypergraph, there exists a monochromatic matching of size $\lfloor \frac{n+q-1}{r+q-1}\rfloor$. In this paper, we prove a transference version of this theorem. More precisely, for fixed $q$ and $r$, we show that with high probability, a monochromatic matching of approximately the same size exists in any $q$-colouring of a random hypergraph, already when the average degree is a sufficiently large constant.
In fact, our main new result is a defect version of the Alon--Frankl--Lov\'asz theorem for almost complete hypergraphs. From this, the transference version is obtained via a variant of the weak hypergraph regularity lemma. The proof of the defect version uses tools from extremal set theory developed in the study of the Erd\H{o}s matching conjecture.
\end{abstract}

\section{Introduction}\label{sec:intro}

A flourishing trend in combinatorics has been showing that classical theorems concerning dense graphs (or hypergraphs) have corresponding analogues in the setting of (sparse) random graphs.
Such results are usually known as \emph{transference} theorems and include, among many others, the breakthroughs of R\"odl and Ruci\'nski~\cite{RR:95} on the transference of Ramsey's theorem, and of Conlon and Gowers~\cite{CG:2016} and Schacht~\cite{schacht:2016} on the transference of Tur\'an's theorem.
Moreover, the study of combinatorial theorems for random graphs has generated several exciting recent developments in probabilistic and extremal combinatorics, including the sparse regularity method, hypergraph containers, the K{\L}R conjecture and the absorption method. 
We refer to the surveys of Conlon~\cite{conlon:2014} and B\"ottcher~\cite{boettcher:2017} (and the references therein) for more details.

Here we are interested in a transference version of the celebrated result of Alon, Frankl and Lov\'asz~\cite{AFL:86} concerning the chromatic number of Kneser hypergraphs.
In 1955, Kneser~\cite{Kneser:55} conjectured that if the $r$-subsets of a $(2r+q-1)$-element set are split into $q$ classes, then one of the classes will contain two disjoint $r$-subsets.
The conjecture remained open for 23 years, until Lov\'asz~\cite{lovasz:78} gave a topological proof using the Borsuk--Ulam theorem.
His contribution is often considered to be the start of the field of topological combinatorics, and we refer to the book of Matou{\v s}ek~\cite{Ma:03} for more examples.
More generally, given $n,r,k \in \mathbb{N}$ with $n \ge kr$, the \defn{Kneser hypergraph $KG^k(n,r)$} is the $k$-uniform hypergraph (short: \defn{$k$-graph}) where the vertices are all the $r$-subsets of $[n]$ and a collection of $k$ vertices forms an edge if the corresponding $r$-sets are pairwise disjoint.
Then, Kneser's conjecture is equivalent to the statement that $KG^2(n,r)$ is not $q$-colourable if $n \ge 2r+q-1$.
As a generalization of this, Erd\H{o}s~\cite{erdos:73} conjectured even further that $KG^k(n,r)$ is not $q$-colourable if $n \ge (q-1)(k-1)+kr$.
The case $k=2$ corresponds to Kneser's original conjecture. Moreover, the validity of the case $r=2$ is a classical result by Cockayne and Lorimer~\cite{CL:75}.
The conjecture of Erd\H{o}s was finally resolved by Alon, Frankl and Lov\'asz~\cite{AFL:86}, also using topological methods.
We state their result in an equivalent form concerning the size of a monochromatic matching that we can guarantee in any colouring of the edges of a complete hypergraph.
\begin{theorem}[Alon, Frankl and Lov\'asz~\cite{AFL:86}]
\label{thm:afl}
    Let $n,r,q \in \mathbb{N}$ with $r,q \ge 2$.
    Then any $q$-colouring of the edges of the complete $n$-vertex $r$-graph $K_n^{(r)}$ contains a monochromatic matching of size at least $\lfloor \frac{n+q-1}{r+q-1}\rfloor$. 
\end{theorem}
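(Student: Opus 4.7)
The natural approach is to reduce Theorem~\ref{thm:afl} to the equivalent statement that the Kneser hypergraph $KG^m(n,r)$ has chromatic number at least $q+1$ whenever $n\ge (q-1)(m-1)+mr$. Setting $m:=\lfloor (n+q-1)/(r+q-1)\rfloor$, the defining inequality $m(r+q-1)\le n+q-1$ rearranges to $n\ge(q-1)(m-1)+mr$; and a proper $q$-colouring of $KG^m(n,r)$ is precisely a $q$-colouring of $\binom{[n]}{r}$ in which no colour class contains $m$ pairwise disjoint $r$-subsets. So it suffices to prove this chromatic lower bound, which is the resolution by Alon, Frankl and Lov\'asz of Erd\H{o}s's conjecture.

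For the chromatic bound, I would follow the topological route. Place $n$ generically chosen points on the moment curve in $S^{d}$ with $d$ chosen so that a Gale-type lemma guarantees that, for every configuration of $q$ pairwise disjoint open hemispheres arising from a point of the $q$-fold deleted join $(S^{d})^{*q}_{\Delta}$, the corresponding $q$ subsets of points collectively contain an $m$-matching of $r$-subsets inside at least one hemisphere; this is exactly where the bound $n\ge (q-1)(m-1)+mr$ enters. Given a hypothetical proper $q$-colouring $c\colon\binom{[n]}{r}\to[q]$, encode the colour distribution of such witness matchings as a continuous $\mathbb{Z}_q$-equivariant map from $(S^{d})^{*q}_{\Delta}$ into the unit sphere of a suitable $\mathbb{Z}_q$-representation (essentially $m-1$ copies of the reduced regular representation of $\mathbb{Z}_q$). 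The assumption that no colour class contains $m$ disjoint $r$-sets is what prevents the image from collapsing to the origin, so the map is well defined.

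Dold's theorem --- the $\mathbb{Z}_q$-equivariant generalisation of Borsuk--Ulam --- then delivers the desired contradiction, since the connectivity of the source will exceed the dimension of the target sphere. The main obstacle in turning this sketch into a proof is the bookkeeping: the arithmetic bound $n\ge(q-1)(m-1)+mr$, the dimension $d$, the connectivity of $(S^{d})^{*q}_{\Delta}$, and the dimension of the target representation sphere must line up precisely for Dold's theorem to apply. A secondary difficulty is ensuring the map is genuinely continuous and $\mathbb{Z}_q$-equivariant; this is typically handled via a barycentric/partition-of-unity interpolation on a simplicial model of the deleted join, or equivalently by working throughout with the box complex $\cB_0(KG^m(n,r))$. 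It is the matching of the combinatorial extremality with the topological inequality, rather than either ingredient in isolation, that constitutes the core of the argument.
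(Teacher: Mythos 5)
The paper does not prove Theorem~\ref{thm:afl}: it is stated as a cited result of Alon, Frankl and Lov\'asz and invoked as a black box in the proof of the defect version (Theorem~\ref{thm:deficiency}). There is therefore no in-paper proof to compare yours against, and indeed supplying such a proof would be outside the scope of the paper.

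Taken on its own terms, your reduction is correct: with $m := \lfloor (n+q-1)/(r+q-1)\rfloor$ one has $m(r+q-1)\le n+q-1$, hence $n\ge (q-1)(m-1)+mr$, and a $q$-colouring of $\binom{[n]}{r}$ with no monochromatic matching of size $m$ is exactly a proper $q$-colouring of $KG^m(n,r)$; so the statement is equivalent to $\chi(KG^m(n,r))>q$ in this range. The topological route you outline --- points on the moment curve plus a Gale-type lemma, an equivariant map from a deleted join into a representation sphere, and a contradiction via Dold's theorem --- is a recognised way to prove this. However, as a proof attempt it is incomplete at precisely the point you yourself flag: the entire content of the theorem is the dimension/connectivity bookkeeping, and you do not carry it out. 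You do not fix $d$, do not compute the connectivity of $(S^d)^{*q}_{\Delta}$, do not identify the dimension of the target sphere, and do not verify that under the hypothesis $n\ge(q-1)(m-1)+mr$ the connectivity strictly exceeds the target dimension --- which is exactly what Dold's theorem requires. Until those quantities are pinned down and shown to line up, the contradiction cannot be drawn, and the sketch remains an outline rather than a proof.
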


The bound $k:=\lfloor \frac{n+q-1}{r+q-1}\rfloor$ on the size of the matching is best possible as shown by the following construction.
Partition $[n]$ into $q$ sets $V_1,\dots,V_q$ such that $V_i$ has size at most $k$ for each $1\le i \le q-1$ and $V_q$ has size at most $rk+r-1$. 
Given an edge $e \in K_n^{(r)}$, for $1 \le i \le q-1$, we assign colour $i$ to $e$ if and only if $e$ intersects $V_i$, and we assign colour $q$ to $e$ if and only if $e$ is completely contained in $V_q$. Note that an edge might be assigned several colours $1 \leq i \leq q-1$, and in this case we choose one of these colours arbitrarily.
Then this yields an extremal $q$-colouring with respect to Theorem~\ref{thm:afl}.
Indeed, for $1 \le i \le q-1$, every edge with colour $i$ has to intersect~$V_i$ and hence any matching of colour $i$ has size at most $|V_i|\le k$. Moreover, every edge with colour $q$ is completely contained in $V_q$ and hence the size of any matching of colour $q$ is at most $\lfloor |V_q|/r\rfloor  \le k$.

Here, we prove a transference version of Theorem~\ref{thm:afl}.
The random hypergraph model we consider is the binomial random $r$-graph $\bG^{(r)}(n,p)$, which has $n$ vertices, and where each $r$-set of vertices forms an edge independently with probability~$p$.
We show that if $p \gg n^{-r+1}$, then $\bG^{(r)}(n,p)$ typically contains a monochromatic matching of asymptotically the same size as what is guaranteed to exist in $K_n^{(r)}$ by Theorem~\ref{thm:afl}.

\begin{theorem}[Transference version of the AFL Theorem]
\label{thm:main} 
For all $r, q \in \mathbb{N}$ with $r, q \ge 2$, and all $\mu >0$, there exists $C > 0$ such that, provided $p\ge Cn^{-r+1}$, w.h.p.\ the following holds for $G\sim \bG^{(r)}(n,p)$: For any $q$-colouring of the edges of $G$, there exists a monochromatic matching of size at least $(1-\mu)\frac{n}{r+q-1}$.
\end{theorem}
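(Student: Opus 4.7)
The plan is to follow the two-step strategy signalled in the abstract. First, establish a \emph{defect} version of Theorem~\ref{thm:afl}: any $q$-colouring of the edges of an $r$-graph on $N$ vertices containing at least $(1-\delta)\binom{N}{r}$ edges contains a monochromatic matching of size at least $(1-\mu)\frac{N}{r+q-1}$, provided $\delta=\delta(\mu,r,q)$ is small enough. Second, use a weak hypergraph regularity lemma to reduce the sparse random setting to this almost-complete one.

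To carry out the reduction, I would apply a weak hypergraph regularity lemma tailored to $\bG^{(r)}(n,p)$ in the regime $p\ge Cn^{-r+1}$, partitioning $V(G)$ into $t=t(\eps,q,r)$ parts $V_1,\dots,V_t$ of near-equal size such that, for each of the $q$ colour classes and all but an $\eps$-fraction of the $r$-tuples of parts, the coloured edges are $\eps$-regular. Since $p\gg n^{-r+1}$, the expected number of $G$-edges across any $r$-tuple of parts is $\omega(1)$, so by standard concentration, w.h.p.\ the $G$-density across almost every $r$-tuple of parts is $(1\pm\eps)p$. Now define a reduced coloured $r$-graph $R$ on $[t]$: for each $r$-tuple $\{i_1,\dots,i_r\}$ that is $\eps$-regular in every colour, assign the colour whose relative density within this tuple is largest; omit every other tuple. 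Then $R$ has at least $(1-q\eps)\binom{t}{r}$ edges, so the defect AFL yields a monochromatic matching $M$ in $R$ of size $(1-\mu/2)\frac{t}{r+q-1}$.

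To lift $M$ back to $G$, for each edge $\{i_1,\dots,i_r\}\in M$ assigned colour $c$, the $r$-tuple $(V_{i_1},\dots,V_{i_r})$ is $\eps$-regular in colour $c$ with density $\Omega(p)$. A standard greedy packing argument (remove disjoint colour-$c$ edges one at a time, using that regularity persists as long as one has used only a small proportion of each block) yields a colour-$c$ matching inside $(V_{i_1},\dots,V_{i_r})$ that covers a $(1-o(1))$-fraction of the vertices. Doing this in parallel across all the edges of $M$ gives a monochromatic matching in $G$ of size $(1-\mu)\frac{n}{r+q-1}$, as required.

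The hard part will be proving the defect AFL itself. The original proof of Theorem~\ref{thm:afl} is topological and is not robust under the removal of a positive-density set of edges, so a purely combinatorial argument is needed. Guided by the extremal construction, which splits $[N]$ into $V_1,\dots,V_q$ with $|V_i|\le k$ for $i<q$ and edges inside $V_q$, I expect the proof to proceed by induction on $q$: use shifting and stability tools from the Erd\H{o}s matching conjecture to argue that, after removing a small fraction of edges, either one recovers a large colour-$q$ matching directly inside some dense ``core'', or one of the colour classes $1,\dots,q-1$ can be localized to a small vertex set that one removes, reducing to a $(q-1)$-colouring on an almost-complete $r$-graph. Ensuring that the defect is absorbed without losing more than $\mu\frac{N}{r+q-1}$ in the matching size --- even in the boundary regime where the extremal construction is near-optimal in every colour --- is the delicate balancing point.
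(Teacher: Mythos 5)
Your reduction from the random setting to the almost-complete setting via the weak sparse hypergraph regularity lemma matches the paper's Section~3 in every essential respect: the construction of the reduced/cluster hypergraph (assigning colour by regularity and density, obtaining at least $(1-q\eps)\binom{t}{r}$ edges), the application of the defect theorem to $R$, and the lift back to $G$ (the paper's Lemma~\ref{lem:from_reduced} packages the greedy packing argument you describe). That half of your proposal is correct and follows the paper's route.

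However, your proposal reduces to the defect version of AFL and then only speculates about how to prove it, and the speculated approach is genuinely different from the paper's and has a real gap. You propose an induction on $q$ in which the crucial step is a dichotomy: either a large colour-$q$ matching sits inside a dense core, or one of the colours $1,\dots,q-1$ can be ``localized to a small vertex set'' whose removal reduces to $q-1$ colours. There is no mechanism in your sketch for establishing this dichotomy, and it is far from clear it holds: a general near-optimal colouring need not have any colour class concentrated around a small vertex set, and removing a small vertex set does not make a colour disappear. Indeed, the paper explicitly discusses (Section~\ref{sec:defect}, introductory paragraphs) why several ``structural/gadget'' approaches fail --- there are colourings where all colour classes are large yet no small structure forces the desired reduction --- which suggests your proposed dichotomy is exactly the kind of claim that is hard (possibly false) to justify. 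The paper instead avoids any structural analysis of the colouring: it fixes a large $k$, looks at the family $\cF$ of $k$-sets inducing complete $r$-graphs, applies the original AFL theorem as a black box inside each $F\in\cF$ to get a monochromatic matching $M_F$, passes to the majority-colour subfamily $\cF'$, and then uses a shadow inequality generalizing Frankl's (Theorem~\ref{thm:shadow}, yielding Theorem~\ref{thm:approxmatch}) to find $s\approx n/k$ members of $\cF'$ that cover almost all of $[n]$ with only $O(1)$ overlap per set; pruning the overlap gives the monochromatic matching. The extremal set theory input is the shadow/almost-cover lemma, not a stability theorem, and no induction on $q$ is used. You would need to either supply a working stability argument for your dichotomy (which the paper suggests is unavailable) or switch to the paper's black-box-plus-shadow approach for the defect theorem.
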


The size of the matching is asymptotically best possible since one cannot do better even in the complete $r$-graph, as explained above. 
In fact, it is also necessary that $C$ is sufficiently large given $\mu$, since $G$ must contain at most $O(\mu n)$ isolated vertices.
Indeed, by the optimality of Theorem~\ref{thm:afl}, if $G$ is an $n$-vertex $r$-graph with $n'$ isolated vertices, then there is a $q$-colouring of its edges whose largest monochromatic matching has size at most $\lfloor \frac{(n-n')+q-1}{r+q-1} \rfloor$.

The graph case $r=2$ (i.e.\ a transference version of the Cockayne--Lorimer theorem) was already proved by Gishboliner, Krivelevich and Michaeli~\cite{GKM:22ii}, with earlier results of Letzter~\cite{letzter:2016} and Dudek and Pra{\l}at~\cite{DP:2017} implying the cases $(r,q)=(2,2)$ and $(r,q)=(2,3)$, respectively.

We will prove Theorem~\ref{thm:main} by combining the sparse hypergraph regularity method with the following ``defect'' version of the Alon--Frankl--Lov\'asz theorem, which shows that, for large $n$, the conclusion of Theorem~\ref{thm:afl} approximately holds even for edge-colourings of \emph{almost} complete $r$-uniform hypergraphs.

\begin{theorem}[Defect version of the AFL Theorem]
\label{thm:deficiency} 
For all $r,q \in \mathbb{N}$ with $r,q\ge 2$, and all $\mu >0$, there exists $\eps >0$ such that the following holds for all sufficiently large $n$.  
Let $G$ be an $n$-vertex $r$-graph whose edges are $q$-coloured and assume $e(G)\ge (1-\eps)\binom{n}{r}$. Then $G$ contains a monochromatic matching of size at least $(1-\mu)\frac{n}{r+q-1}$.
\end{theorem}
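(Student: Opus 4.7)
The plan is to induct on $q$, with the base case $q=1$ being the routine fact that an $n$-vertex $r$-graph with at least $(1-\eps)\binom{n}{r}$ edges contains a matching of size $(1-o_\eps(1))n/r$, extractable greedily by repeatedly picking an edge and deleting its vertices (this works as long as $\eps < \mu^r$, say). For the inductive step, assume the theorem for $q-1$ colours with a parameter $\mu'$ slightly larger than $\mu$, fix a $q$-colouring of $G$, set $s := (1-\mu)n/(r+q-1)$, and suppose for contradiction that $\nu(G_i) < s$ for every colour class $G_i$.

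The central tool I would use is a stability form of the Erd\H{o}s matching theorem from the extremal set theory literature (results of Frankl, Frankl--Kupavskii, and others developed around the Erd\H{o}s matching conjecture). Such a result provides a structural dichotomy: for $n$-vertex $r$-graphs $H$ with $\nu(H) < s$ in the relevant range of $s/n$, either (A) there exists $W \subseteq V(H)$ with $|W| \le s-1$ intersecting all but at most $\delta n^r$ edges of $H$, or (B) $e(H) \le \binom{rs+r-1}{r} + \delta n^r$, where $\delta = \delta(\eps) \to 0$ as $\eps \to 0$. After discarding the $O(q\delta n^r)$ exceptional edges, which only marginally weakens the density of $G$, each $G_i$ is cleanly of type (A) or (B).

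If some colour class, say $G_1$, is of type (A) with cover $W_1$, then removing $W_1$ destroys every colour-$1$ edge and leaves a $(q-1)$-coloured $r$-graph $G'$ on at least $n - s + 1$ vertices whose density is still $1 - O(\eps)$. The induction hypothesis applied to $G'$ then produces a monochromatic matching of size at least $(1-\mu')(n-|W_1|)/(r+q-2)$; a direct calculation using $|W_1| \le s-1$ shows this exceeds $s = (1-\mu)n/(r+q-1)$ provided $\mu'$ is chosen so that $(1-\mu')(r+q-2+\mu) \ge (1-\mu)(r+q-2)$ (satisfied e.g.\ by $\mu' = \mu$), contradicting the assumption on $\nu(G_j)$. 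Otherwise every $G_i$ is of type (B), whence
\begin{equation*}
e(G) \le q\binom{rs+r-1}{r} + q\delta n^r = \frac{qr^r(1-\mu)^r}{(r+q-1)^r}\binom{n}{r}(1+o(1)).
\end{equation*}
Bernoulli's inequality yields $(1+(q-1)/r)^r > q$ for $r,q \ge 2$, so $qr^r/(r+q-1)^r \le 1 - c_{r,q}$ for some positive constant $c_{r,q}$; choosing $\mu$ and $\eps$ sufficiently small contradicts $e(G) \ge (1-\eps)\binom{n}{r}$.

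The principal obstacle is securing a sufficiently clean stability dichotomy of the above form in the exact parameter window $s/n \approx 1/(r+q-1)$, where both extremal configurations coexist. Depending on how $q$ relates to $r$, this may follow directly from known refinements of Frankl's matching theorem, or may demand a purpose-built structural lemma drawn from the Erd\H{o}s matching conjecture machinery. A secondary, mainly bookkeeping issue is tracking how $\mu$ and $\eps$ evolve through the induction; since the parameter loss per step is a bounded constant depending only on $r$ and $q$, this can be absorbed by starting with a sufficiently small base value.
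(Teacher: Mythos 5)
Your inductive strategy via a stability dichotomy for the Erd\H{o}s matching problem is genuinely different from the paper's argument. The paper lifts the problem to large uniformity: for a large constant $k$, it applies the AFL theorem inside every complete $k$-vertex induced subhypergraph, colours each such $k$-set by the colour of the resulting matching, and then (Theorem~\ref{thm:approxmatch}) extracts about $n/k$ of these $k$-sets of a common colour that overlap only mildly, so that the corresponding matchings almost agree with a single large matching. Theorem~\ref{thm:approxmatch} rests on a generalized Frankl shadow inequality (Theorem~\ref{thm:shadow}), and crucially needs only constant \emph{density} because the uniformity $k$ can be chosen huge; no stability analysis of the Erd\H{o}s matching conjecture is invoked anywhere.

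The gap in your proposal is exactly the one you flag, and it is a real obstruction rather than a bookkeeping issue. What you need is a one-sided (``strong'') stability statement: whenever $\nu(H)<s$ and $e(H)$ exceeds the clique bound $\binom{rs+r-1}{r}$ (which sits well \emph{below} the star bound, hence far below the extremal value), then $H$ is $o(n^r)$-close to a star. Standard stability for the matching problem only kicks in near the actual extremal value, so a hypergraph that mixes a moderate star with a moderate clique has $\nu<s$, more edges than the clique bound, yet need not admit a size-$(s-1)$ near-cover; whether a suitable dichotomy holds for a workable constant $\delta$ is far from a drop-in corollary of existing results. Compounding this, the parameter window $s=(1-\mu)n/(r+q-1)$ lies outside the regime where even the \emph{exact} Erd\H{o}s matching conjecture is known unless $q$ is large relative to $r$: Frankl's condition $n\gtrsim (2r-1)s$ forces $q\ge r-O(\mu)$, and the best current ranges still require roughly $q\gtrsim 2r/3$. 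So for small $q$ relative to $r$ the inductive step would rest on an unproven and quite possibly false structural lemma, whereas the paper's reduction works uniformly for all $r,q\ge 2$. The rest of your sketch---the greedy base case, the verification that $\mu'=\mu$ survives the cover-removal step, and the Bernoulli estimate $qr^r/(r+q-1)^r<1$---is correct, but the engine driving the induction is missing.
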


The deduction of Theorem~\ref{thm:main} from Theorem~\ref{thm:deficiency} via the regularity method works in a similar way as in the graph case. However, finding a strategy to prove Theorem~\ref{thm:deficiency} presents several challenges.
First, the topological proof of the Alon--Frankl--Lov\'asz theorem does not seem to be robust enough to generalise to hypergraphs that miss a significant proportion of the edges. Moreover, the proof of the graph case ($r=2$) from~\cite{GKM:22ii} relies on a good understanding of matchings in graphs in the form of the Tutte--Berge formula, no analogue of which is available for hypergraphs. Finally, one could try to prove the existence of certain small coloured configurations (``gadgets'') that can be repeatedly removed until the remaining hypergraph has a very specific structure. For instance, in the case of two colours, in~\cite{GGS:23} it is implicitly proved that either there exist two edges of different colour that share $r-1$ vertices, or the colouring is almost monochromatic. For general $q$, one possible gadget would be a set of $r+q-1$ vertices that contains an edge of each colour. However, for $r \ge 3$, there are constructions where all colour classes are large yet there is no such gadget.
For instance, consider the complete $r$-graph on the disjoint union of $q-1$ sets $V_1, \dots, V_{q-1}$ of the same size, where each edge completely contained in some $V_i$ is coloured with colour $i$ and all other edges are coloured with colour~$q$.
Thus this approach seems infeasible as well.
We instead follow a new approach and make use of tools from extremal set theory developed in the study of the Erd\H{o}s matching conjecture (see the overview in Section~\ref{sec:defect} for more details).

Theorem~\ref{thm:main} also has implications to the discrepancy of perfect matchings in random hypergraphs.
The general question in discrepancy theory is whether, given a ground set $\Omega$, a family
$\cP \subseteq 2^\Omega$ and a positive integer $q \ge 2$, there exists a $q$-colouring of $\Omega$ such that each set in $\cP$ contains roughly the same number of elements of each colour.
In the setting of (hyper)graphs, the main interest is finding conditions on a hypergraph $G$ under which any $q$-edge-colouring of $G$ contains a particular substructure with high discrepancy, meaning that significantly more than a $1/q$-proportion of the substructure's edges are in the same colour.
The question goes back to works of Erd\H{o}s and Spencer~\cite{ES:72}, and of Erd\H{o}s, F\"uredi, Loebl and S\'os~\cite{EFLS:95}.
Recently, this has been extensively studied for minimum degree conditions forcing perfect matchings with high discrepancy in hypergraphs (see~\cite{BTZ-G:24,GGS:23,HLMPSTZ24+,LMX24+}), with the graph case having been considered earlier (see~\cite{BCJP:20,FHLT:21,GKM:22}).
H{\`a}n, Lang, Marciano, Pavez-Sign{\'e}, Sanhueza-Matamala, Treglown and Z{\'a}rate-Guer{\'e}n~\cite[Section 5.3]{HLMPSTZ24+} showed that there exists a constant $C>0$ such that if $p \ge C\sqrt{n^{2-r}}$, then w.h.p.\ in every $q$-colouring of the edges of $\bG^{(r)}(n,p)$ there is a perfect matching with high discrepancy.
However, due to the celebrated result of Johansson, Kahn, and Vu~\cite{JKV:08}, we know that a perfect matching already typically exists for $p \gg n^{-r+1} \log n$ (cf. Theorem~\ref{thm:JKV}).
A natural question, which was asked in~\cite{HLMPSTZ24+}, is to determine the correct threshold (depending on $r, q$) for the property of having a perfect matching with high discrepancy in every $q$-colouring.
Motivated by this, we prove the following result, which starts working at the threshold for the existence of a perfect matching, and even yields an asymptotically optimal bound on the discrepancy. However, note that we assume the colouring is known a priori, that is, the statement we prove to hold w.h.p.\ is for a fixed colouring.

\begin{theorem}
\label{thm:perfect_matching}
    For all $r, q \in \mathbb{N}$ with $r,q \ge 2$, and all $\mu>0$, there exists $C>0$ such that, provided $p\ge Cn^{-r+1}\log n$, the following holds.
    For any $q$-colouring of the $r$-subsets of $[n]$, the random hypergraph $G\sim \bG^{(r)}(n,p)$ contains w.h.p.\ a perfect matching with at least $(1-\mu)\frac{n}{r+q-1}$ edges of the same colour.
\end{theorem}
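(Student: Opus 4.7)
The plan is to combine Theorem~\ref{thm:main} with the Johansson–Kahn–Vu theorem (Theorem~\ref{thm:JKV}) via a two-round (sprinkling) exposure of the random hypergraph. We may assume $r \mid n$, since otherwise $G$ contains no perfect matching at all. Write $p = p_1 + p_2 - p_1 p_2$, where $p_1 := C_1 n^{-r+1}$ for the constant $C_1$ supplied by Theorem~\ref{thm:main} applied with parameter $\mu/2$, and $p_2$ is chosen so that $p_2 \ge C_2 n^{-r+1}\log n$ for a sufficiently large constant $C_2$; this splitting is possible provided $C = C(\mu, r, q)$ is taken large enough. Sample $G_1 \sim \bG^{(r)}(n, p_1)$ and $G_2 \sim \bG^{(r)}(n, p_2)$ independently, so that $G := G_1 \cup G_2 \sim \bG^{(r)}(n, p)$.

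First, apply Theorem~\ref{thm:main} to $G_1$ endowed with the $q$-colouring inherited by restriction from the fixed $q$-colouring of $\binom{[n]}{r}$. This yields, w.h.p.\ over $G_1$, a monochromatic matching $M \subseteq G_1$ of size $k := |M| \ge (1-\mu/2)\tfrac{n}{r+q-1}$. Let $S := [n] \setminus V(M)$, so $|S| = n - rk$ is divisible by $r$ (since $r \mid n$) and $|S| = \Theta(n)$.

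Conditional on $G_1$, the set $S$ is determined, and by independence, $G_2[S]$ is distributed as $\bG^{(r)}(|S|, p_2)$. Since $|S| = \Theta(n)$, the bound $p_2 \ge C'_2 |S|^{-r+1} \log |S|$ holds for $C'_2$ arbitrarily large once $C$ is sufficiently large, so Theorem~\ref{thm:JKV} gives that $G_2[S]$ contains a perfect matching $M'$ with conditional probability $1 - o(1)$. Averaging over $G_1$ and using a union bound, w.h.p.\ both $M$ and $M'$ exist, and then $M \cup M'$ is a perfect matching of $G$ whose monochromatic part $M$ consists of at least $(1-\mu/2)\tfrac{n}{r+q-1} \ge (1-\mu)\tfrac{n}{r+q-1}$ edges of a single colour.

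The only real subtlety is that the vertex set $S$ depends on $G_1$, so Theorem~\ref{thm:JKV} cannot be applied to a prescribed induced subhypergraph of $G$ directly. Sprinkling removes this obstruction: conditional on any outcome of $G_1$, the hypergraph $G_2[S]$ is an honest binomial random hypergraph on $|S|$ vertices at density $p_2$, so Theorem~\ref{thm:JKV} applies pointwise and hence in expectation. The cost is only a constant factor in $p$, which is absorbed by choosing $C$ large; divisibility of $|S|$ by $r$ is automatic from $|V(M)| = rk$ and the standing assumption $r \mid n$.
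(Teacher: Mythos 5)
Your proof is correct and follows essentially the same two-round exposure strategy as the paper: reveal a sparse first round $G_1\sim\bG^{(r)}(n,\Theta(n^{-r+1}))$ and apply Theorem~\ref{thm:main} to extract a large monochromatic matching, then reveal the second round on the uncovered vertices and apply Theorem~\ref{thm:JKV} to complete it to a perfect matching. The only cosmetic differences are your explicit remarks on $r\mid n$ and your use of $\mu/2$ in the first stage (the paper takes the ceiling $\lceil(1-\mu)\tfrac{n}{r+q-1}\rceil$ directly), neither of which changes the substance.
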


\bigskip
\noindent \textbf{Organisation.}
The rest of the paper is organised as follows.
The proof of the defect theorem (Theorem~\ref{thm:deficiency}) is discussed in Section~\ref{sec:defect} and the proof of the transference theorem (Theorem~\ref{thm:main}) in Section~\ref{sec:transference}.
The proof of Theorem~\ref{thm:perfect_matching} is also discussed there.
Finally, we give concluding remarks in Section~\ref{sec:remarks}.
The proof of the multicolour weak sparse hypergraph regularity lemma is added for completeness in Appendix~\ref{appendix:regularity}.
\bigskip

\noindent \textbf{Notation.}
We use standard graph theory notation.
In particular, we let $K_n^{(r)}$ denote the complete $r$-graph on $n$ vertices, and for a graph $G$ and a subset $W \subseteq V(G)$ we let $G[W]$ denote
the subgraph of $G$ induced by $W$.

We let $[n]$ denote the set $\{1, \dots, n\}$ and, given a set $X$ and an integer $i \ge 0$, we write $\binom{X}{i}$ for the collection of all subsets of $X$ of size $i$.

For $a, b, c \in (0, 1]$, we write $a \ll b \ll c$ in our statements to mean that there are increasing functions $f, g : (0, 1] \to (0, 1]$ such that whenever $a \le f(b)$ and $b \le g(c)$, then the subsequent result holds. 

We say that an event holds with high probability (w.h.p.) if the probability that it holds tends to $1$ as the number of vertices $n$ tends to infinity.

\section{Proof of the defect theorem}
\label{sec:defect}
We start by describing our strategy for the defect theorem (Theorem~\ref{thm:deficiency}). 
Let $G$ be an $n$-vertex $q$-edge-coloured $r$-graph $G$ with $e(G) \ge (1-o(1))\binom{n}{r}$, and recall that we want to show that $G$ contains a monochromatic matching of size $(1-o(1))\frac{n}{r+q-1}$.
Our first idea is to fix a large $k \in \mathbb{N}$ (which does not depend on $n$), and consider the family $\cF$ of the $k$-subsets $F$ of $V(G)$ for which $G[F]$ is a complete $r$-graph.
Observe that since $G$ is almost-complete, almost all $k$-subsets of $V(G)$ belong to $\cF$, i.e., $|\cF| \geq (1-o(1))\binom{n}{k}$.
It may seem at this stage that we have not really made any progress as, thinking of $\cF$ as a $k$-uniform hypergraph on $V(G)$, we still have that $\cF$ is only almost-complete.
However, we have gained the flexibility of choosing $k$, and our argument relies on choosing $k$ to be large enough in terms of $r,q$.

For each $F \in \cF$, we can apply the Alon--Frankl--Lov\'asz theorem as a black box to $G[F]$ and get a monochromatic matching $M_F$ of $G[F]$ of size roughly $\frac{k}{r+q-1}$.
Assign to $F$ the colour of the monochromatic matching $M_F$, giving a $q$-colouring of $\mathcal{F}$. 
Let blue be the most popular colour in this colouring of $\mathcal{F}$, and let $\cF':=\{F \in \cF : \text{$M_F$ is blue}\}$; so $|\cF'| \geq (1-o(1)) \binom{n}{k}/q$.
Our goal is now to find a certain structure in $\cF'$ which would translate to a large blue matching in the original hypergraph $G$.

A naive approach would be to look for a large blue matching in $\cF'$. More precisely, we need a matching of size 
$(1-o(1))\frac{n}{k}$ in $\cF'$ to obtain a matching of size $(1-o(1))\frac{n}{r+q-1}$ in $G$.
With this approach, the relevant question is what size of a matching is guaranteed to exist in an $n$-vertex $k$-graph with a given number of edges. 
This is a classical problem of Erd\H{o}s~\cite{erdos:65}, known as the Erd\H{o}s matching conjecture (short: EMC), asking, given $n,k,t \in \mathbb{N}$, to determine the maximum number of edges of an $n$-vertex $k$-graph which does not contain a matching of size $t+1$.
If $n < k(t + 1)$, the problem is trivial as the family $\binom{[n]}{k}$ itself does not contain a matching of size $t+1$.
When $n \ge k(t+1)$, Erd\H{o}s conjectured that the answer is $\max\{\binom{n}{k}-\binom{n-t}{k}, \binom{k(t+1)-1}{k}\}$. 
The two bounds correspond to two natural extremal constructions: a star-like construction, obtained by taking all edges intersecting $[t]$ in at least one vertex, and a clique-like construction, obtained by taking all edges completely contained in $[k(t+1)-1]$.
However, even if the conjecture was known to be true, it would not imply the desired bound on the size of the matching.
Indeed, for $t = (1-\eps) \frac{n}k$, the clique-like construction has density roughly $(1-\eps)^k$, and, for large $k$, the star-like construction has density roughly $1-e^{-(1-\eps)}$. Hence, the density of $\cF'$, which is about $1/q$, is not enough to guarantee the existence of an almost perfect matching.

Our second idea is that instead of a matching, we look in $\cF'$ for a collection of edges which are allowed to overlap (only) mildly. 
More precisely, our goal is to find $F_1,\dots,F_s \in \cF'$ with $s = (1-o(1))\frac{n}{k}$ such that the set $W$ of vertices appearing in more than one of the sets $F_1,\dots,F_s$ is small.
Perhaps surprisingly, such $F_1,\dots,F_s$ always exist provided that $k$ is large enough as a function of $\binom{n}{k}/|\cF'|$. 
Then, removing all edges in $\bigcup_{i=1}^s M_{F_i}$ which intersect $W$, we obtain a blue matching in $G$ of the desired size $(1-o(1))\frac{n}{r+q-1}$. 
\medskip

The key step is hence to find the desired sets $F_1,\dots,F_s \in \cF'$.
We now state the result which allows one to do this.
Roughly speaking, it says that in a hypergraph with large uniformity a constant density suffices to guarantee an ``almost-perfect almost-cover''.
\begin{theorem}\label{thm:approxmatch}
Let $1/n\ll 1/k \ll 1/C \ll \alpha,\beta$ and $\cF\subseteq \binom{[n]}{k}$ with $|\cF|\ge \beta \binom{n}{k}$. Then there exist $s:=\lceil (1-\alpha)\frac{n}{k}\rceil$ sets $F_1,\dots,F_s\in \cF$ with $|F_1\cup F_2 \cup \dots \cup F_s|\ge (k-C)s$.
\end{theorem}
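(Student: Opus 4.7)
The plan is to construct $F_1,\ldots,F_s$ by a greedy procedure. Maintaining the running union $W_i:=F_1\cup\dots\cup F_i$, at step $i+1$ I would pick $F_{i+1}\in\cF$ keeping the overlap $|F_{i+1}\cap W_i|$ below some threshold $c$ to be chosen. Each successful step then contributes at least $k-c$ new vertices to the union, so if the procedure runs for $s$ steps with threshold at most $c$, the union has size at least $(k-c)s$.

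The existence of a suitable $F_{i+1}$ is argued by contradiction via a counting bound. If every $F\in\cF$ had $|F\cap W_i|\ge c+1$, then
\[
|\cF|\le \sum_{j=c+1}^{k}\binom{|W_i|}{j}\binom{n-|W_i|}{k-j}.
\]
Comparing with $|\cF|\ge\beta\binom{n}{k}$, this either contradicts the density assumption (so the greedy can continue) or forces $|W_i|$ to be close to $n$. In the latter case, since $|W_i|\le ik$, we have already accumulated $i\ge (1-\alpha)n/k=s$ sets, and we are done.

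The main obstacle is that, naively, when $|W_i|$ approaches $(1-\alpha)n$ the counting bound becomes vacuous: a typical $k$-subset meets such a large $W_i$ in about $k(1-\alpha)$ elements, far above any constant threshold $c$. To overcome this, I would combine the greedy with an amortised overlap budget across several phases, allowing the per-step threshold $c$ to grow while keeping the total overlap $\sum_i|F_i\cap W_{i-1}|$ below $Cs$. The key technical input here is sharper extremal results (Frankl-style shifting, Kruskal--Katona-type shadow bounds, or stability variants from the Erd\H{o}s matching conjecture literature, as suggested by the authors) to control the density of sets in $\cF$ with prescribed intersection patterns with $W_i$ even when $|W_i|$ is large. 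Morally, one should handle a dichotomy: either $\cF$ is sufficiently ``spread out'' that greedy keeps finding low-overlap sets, or $\cF$ is concentrated in a ``clique-like'' or ``star-like'' region where an almost-cover with bounded overlap can be built by hand. Once enough sets are accumulated, the required bound $|F_1\cup\dots\cup F_s|\ge (k-C)s$ follows directly from the amortised overlap budget.
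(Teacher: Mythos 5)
Your greedy idea diverges from the paper's proof, which is a global contradiction argument: the paper assumes that every choice of $s$ sets from $\cF$ has union at most $(k-C)s$, applies a generalization of Frankl's shadow inequality (Theorem~\ref{thm:shadow}, proved by shifting and induction) with $b=C+1$ to deduce $|\sigma_{k-b}(\cF)|\ge |\cF|/(s-1)^b$, and then shows this exceeds $\binom{n}{k-b}$, a contradiction. No greedy, no case analysis.

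Your proposal has a genuine gap, and you have in fact put your finger on exactly where it is. Once $|W_i|$ is a constant fraction of $n$, a \emph{typical} $k$-set intersects $W_i$ in $\Theta(k)$ vertices, so a density assumption of strength $\beta$ cannot give you any set with overlap $O(1)$ --- even an amortised budget fails, since the threshold $c_i$ that leaves a $\beta$-fraction of $k$-sets grows linearly in $|W_{i-1}|$, giving total overlap $\Theta(n)$ rather than $O(s)$. A concrete killer example is the clique-like family $\cF=\binom{[m]}{k}$ with $m=\beta^{1/k}n$: after $\lfloor m/k\rfloor$ greedy steps every remaining $F\in\cF$ has overlap close to $k$ with the running union, so the greedy stalls with roughly $\beta^{1/k}\frac{n}{k}$ sets, short of $s$ when $\alpha$ is small. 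The theorem still holds here (pick $s$ sets that tile $[m]$ as evenly as possible), but not via your greedy. Your escape hatch --- ``either $\cF$ is spread out or it is clique-like/star-like and one builds the cover by hand'' --- is precisely the hard dichotomy you would need to prove, and you give no mechanism for it; deciding which regime you are in, and constructing the cover in the concentrated regime for an arbitrary dense $\cF$ (not just the clean extremal ones), is where the real work lies. The paper's shadow bound does this work in one stroke, with no structural case analysis, and that is the ingredient your argument is missing.
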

We remark that, for the proof of Theorem~\ref{thm:approxmatch} to work, we only need that $(1-\alpha)^{C+1} < \beta/2$, $k \ge C+1$ and
$n \ge \max\{2k(k+1),2k/(1-\alpha)\}$.
The proof of Theorem~\ref{thm:approxmatch} uses tools from extremal set theory and we start by introducing the relevant definitions.
For a $k$-graph $\cF\subseteq\binom{[n]}{k}$
and $0\le \ell<k$,
we let $\sigma_\ell(\cF)$ denote the \emph{$\ell$-shadow} of $\cF$.
Namely,
\[
  \sigma_\ell(\cF) := \left\{
    G\in\binom{[n]}{\ell}:
    \exists F\in\cF \text{ with } G\subseteq F
  \right\}\, ,
\]
where we observe in particular that if $\cF \neq \emptyset$, then $\sigma_0(\cF)=\{\emptyset\}$ and $|\sigma_0(\cF)|=1$.
In his work on the Erd\H{o}s matching conjecture, Frankl~\cite{frankl:13} proved the following fundamental inequality which generalises Katona's intersection theorem~\cite{katona:64}: For any integer $s \ge 2$ and any $\cF\In \binom{[n]}{k}$ which does not contain $s$ pairwise disjoint sets, it holds that 
\begin{align}
|\sigma_{k-1}(\cF)|\ge |\cF|/(s-1). \label{Frankl shadow}
\end{align}
Note that $\cF$ is a family as above if and only if $|F_1 \cup F_2 \cup \dots \cup F_s| \le ks -1$ for each $F_1, F_2, \dots, F_s \in \cF$, as otherwise the sets $\{F_i:i \in [s]\}$ would be pairwise disjoint.
We can generalise~\eqref{Frankl shadow} as follows.

\begin{theorem}\label{thm:shadow}
Let $k \ge 1$, $s\ge 2$ and $1 \le b \le k$. Assume that $\cF\In \binom{[n]}{k}$ is a family such that 
\begin{equation}
\label{eq:shadow_k-b_hypotheses}
|F_1\cup F_2 \cup \dots \cup F_s|\le ks - (b-1)(s-1) -1
\end{equation}
for all $F_1,\dots,F_s \in \cF$. Then 
\begin{equation}
\label{eq:shadow_k-b}
|\sigma_{k-b}(\cF)|\ge \frac{|\cF|}{(s-1)^b}\, .
\end{equation}
\end{theorem}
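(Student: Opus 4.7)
The plan is to prove the theorem by induction on $b$. The base case $b=1$ is precisely Frankl's shadow inequality~\eqref{Frankl shadow}, since the hypothesis \eqref{eq:shadow_k-b_hypotheses} at $b=1$ becomes $|F_1\cup\dots\cup F_s|\le ks-1$, which is exactly the statement that $\cF$ contains no $s$ pairwise disjoint sets.

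For the inductive step, the natural move is to pass to the vertex links of $\cF$. For each $v\in[n]$, define
\[
\cF_v := \{F\setminus\{v\} : F\in\cF,\ v\in F\}\subseteq\binom{[n]\setminus\{v\}}{k-1}.
\]
The first substep is to verify that if $\cF$ satisfies the hypothesis with parameters $(k,s,b)$, then $\cF_v$ satisfies it with parameters $(k-1,s,b-1)$. This is a direct arithmetic check: taking $G_i = F_i\setminus\{v\}$ for $F_1,\dots,F_s\in\cF$ all containing $v$, one has $|G_1\cup\dots\cup G_s|=|F_1\cup\dots\cup F_s|-1\le ks-(b-1)(s-1)-2$, which is exactly $(k-1)s-(b-2)(s-1)-1$ after expanding. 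Hence the inductive hypothesis gives
\[
|\sigma_{k-b}(\cF_v)|\ \ge\ \frac{|\cF_v|}{(s-1)^{b-1}}
\]
for every $v\in[n]$.

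The final step is to combine these local estimates into the desired global bound $|\sigma_{k-b}(\cF)|\ge |\cF|/(s-1)^b$. Since $\sigma_{k-b}(\cF_v)\subseteq\sigma_{k-b}(\cF)$, this is immediate from a single vertex whenever some $v$ satisfies $|\cF_v|\ge|\cF|/(s-1)$. For families where no such vertex exists, one must aggregate information from several links. A natural double count uses the identity $\sigma_{k-b}(\cF_v)=(\sigma_{k-b+1}(\cF))_v$ together with $\sum_v|\cF_v|=k|\cF|$ and $\sum_v|(\sigma_{k-b+1}(\cF))_v|=(k-b+1)|\sigma_{k-b+1}(\cF)|$, but this yields a lower bound at the wrong shadow level (namely on $|\sigma_{k-b+1}(\cF)|$). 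Descending from $\sigma_{k-b+1}(\cF)$ to $\sigma_{k-b}(\cF)$ is the main technical obstacle I expect, because the natural attempt to invoke~\eqref{Frankl shadow} on $\sigma_{k-b+1}(\cF)$ fails in general — one can construct examples (e.g.\ $\cF=\{\{1,2,3\},\{1,2,4\}\}$ with $k=3$, $b=s=2$) where the higher shadow already contains $s$ pairwise disjoint sets. Closing this gap requires using the full strength of the hypothesis, which controls not just matchings but the precise overlap of any $s$ sets, likely via a shifting reduction to compressed $\cF$ or a direct structural argument on the overlap patterns, and this is where I expect the bulk of the work to lie.
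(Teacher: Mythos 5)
Your setup is correct as far as it goes: the base case $b=1$ is indeed Frankl's inequality~\eqref{Frankl shadow}, and your arithmetic verifying that the link $\cF_v$ satisfies~\eqref{eq:shadow_k-b_hypotheses} with parameters $(k-1,s,b-1)$ is right. But the gap you flag in the final step is genuine and is exactly where the proof fails as proposed. The link estimates $|\sigma_{k-b}(\cF_v)|\ge |\cF_v|/(s-1)^{b-1}$ cannot be aggregated to a bound on $|\sigma_{k-b}(\cF)|$: averaging over $v$ via $\sum_v|\cF_v|=k|\cF|$ only controls $\sum_v|(\sigma_{k-b+1}(\cF))_v|=(k-b+1)|\sigma_{k-b+1}(\cF)|$, which is one shadow level too high, and picking a single heavy $v$ loses a factor of $n/k$ rather than the needed $s-1$. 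Your example $\cF=\{\{1,2,3\},\{1,2,4\}\}$ also correctly shows that one cannot salvage this by applying~\eqref{Frankl shadow} to $\sigma_{k-b+1}(\cF)$, since that family may already contain $s$ disjoint sets.

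The paper avoids the induction on $b$ entirely. It first reduces to a shifted family, then inducts on $n$ (with base case $n\le ks-(b-1)(s-1)-1$ handled by a direct double-count), splitting $\cF$ into $\cF(\overline{n})$ and the link $\cF(n)$. The crucial point you are missing is that, for a \emph{shifted} family, the link $\cF(n)$ can be shown to satisfy~\eqref{eq:shadow_k-b_hypotheses} with parameters $(k-1,s,b)$ --- the \emph{same} $b$, not $b-1$. Your computation only gives the bound $(k-1)s-(b-2)(s-1)-1$; the stronger bound $(k-1)s-(b-1)(s-1)-1$ is extracted by a shifting argument: if $G_1,\dots,G_s\in\cF(n)$ had too large a union, one could replace $n$ by distinct unused low-index elements in several of the $F_i=G_i\cup\{n\}$ (which remain in $\cF$ by shiftedness) to produce $F_1',\dots,F_s'\in\cF$ violating the original hypothesis. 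With the same $b$ on both $\cF(\overline{n})$ and $\cF(n)$, the two contributions $\sigma_{k-b}(\cF(\overline{n}))$ and $\{B\cup\{n\}:B\in\sigma_{k-1-b}(\cF(n))\}$ are disjoint subsets of $\sigma_{k-b}(\cF)$ with the uniform factor $(s-1)^b$, and the induction closes cleanly. So the missing idea is precisely: induct on $n$ after shifting, and use shiftedness to preserve $b$ in the link, rather than inducting on $b$.
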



We remark that Theorem~\ref{thm:shadow} can be deduced from a technical result of Frankl (c.f.~\cite[Corollary~4]{frankl:91}).
Here, for the sake of completeness, we give a direct proof, which proceeds analogously to Frankl's proof~\cite{frankl:13} of~\eqref{Frankl shadow}.

\lateproof{Theorem~\ref{thm:shadow}}
    It is well known (see, for example,~\cite[Proposition~$1$]{frankl:91}) that we can assume that $\cF$ is a \defn{shifted} family, that is, for all $1 \le i < j \le  n$ and $F \in \cF$, the conditions $i \not\in F$, $j \in F$ imply that $F \cup \{i\} \setminus \{j\} \in \cF$ as well.

    We first prove the statement for $k=1$ (and thus $b=1$) and any $s \ge 2$.
    Observe that if $F_1, \dots, F_s \in \binom{[n]}{1}$ and $|F_1 \cup F_2 \cup \dots \cup F_s| < s$, then the $F_i$ cannot be all distinct.
    Therefore if $\cF \subseteq \binom{[n]}{1}$ satisfies~\eqref{eq:shadow_k-b_hypotheses}, then $|\cF| \le s-1$.
    Since $|\sigma_{0}(\cF)|=1$,~\eqref{eq:shadow_k-b} follows.

    Assume now that $k \ge 2$.
    We first prove the statement for all $n \le ks-(b-1)(s-1)-1$.
    Consider the bipartite graph with partite sets $\cF$ and $\sigma_{k-b}(\cF)$, where there is an edge connecting $F \in \cF$ and $G \in \sigma_{k-b}(\cF)$ if and only if $G \subseteq F$. 
    Each $F \in \cF$ has degree $\binom{k}{k-b}=\binom{k}{b}$ and each $G \in \sigma_{k-b}(\cF)$ has degree at most $\binom{n -|G|}{b} = \binom{n - k + b}{b}$. 
    Therefore, by double counting the edges, $\binom{k}{b} \cdot |\cF| \le \binom{n - k + b}{b} \cdot |\sigma_{k-b}(\cF)|$, implying
    \[
        \frac{|\cF|}{|\sigma_{k-b}(\cF)|} \le \frac{\binom{n - k + b}{b}}{\binom{k}{b}} = \prod_{j=0}^{b-1} \frac{n-k+b-j}{k-j} \le (s-1)^b\, ,
    \]
    where the last inequality can be justified as follows.
    Since $n \le ks-(b-1)(s-1)-1$, it is enough to show that $\frac{ks-(b-1)(s-1)-1-k+b-j}{k-j} \le s-1$, which is equivalent to $(b-j-1)(s-2) \ge 0$.
    This is clearly true as $s \ge 2$ and $0 \le j \le b-1$.
    We obtain~\eqref{eq:shadow_k-b} by rearranging.

    We now suppose that $n \ge ks-(b-1)(s-1)$ and proceed by induction on $n$. So assume that the statement holds for $n - 1$ for both $k$ and $k-1$.
    Define 
    \begin{equation*}
        \cF(\overline{n}) := \{F \in \cF: n \not\in F\} \quad \text{ and } \quad \cF(n) := \{F \setminus \{n\}: F \in \cF, n \in F\}\, ,
    \end{equation*}
    and observe that $\cF(\overline{n}) \subseteq \binom{[n-1]}{k}$ and $\cF(n) \subseteq \binom{[n-1]}{k-1}$.
    Since $\cF(\overline{n}) \subseteq \cF$, for any $F_1, \dots, F_s \in \cF(\overline{n})$ we have 
    $$|F_1\cup F_2 \cup \dots \cup F_s|\le ks - (b-1)(s-1) -1\, .$$
    Therefore, by induction, $|\cF(\overline{n})| \le (s-1)^b \cdot |\sigma_{k-b}(\cF(\overline{n}))|$.
    Moreover, we claim that for any $G_1, \dots, G_s \in \cF(n)$ we must have 
    \begin{equation}
    \label{eq:G}
        |G_1\cup G_2 \cup \dots \cup G_s|\le (k-1)s - (b-1)(s-1) -1\, .
    \end{equation}
    Indeed, suppose this is not the case and let $c \ge 0$ be such that 
    $$|G_1\cup G_2 \cup \dots \cup G_s| = (k-1)s - (b-1)(s-1)+ c\, .$$
    By definition of $\cF(n)$, we have $F_i:=G_i \cup \{n\} \in \cF$ and thus 
    $$ks - (b-1)(s-1) -1 \ge |F_1\cup F_2 \cup \dots \cup F_s| = |G_1\cup G_2 \cup \dots \cup G_s| +1\, ,$$
    from which it follows that $c \le s-2$.
    Moreover, by our assumption on $n$, we have
    $$n-1 \ge ks-(b-1)(s-1) -1 = |G_1\cup G_2 \cup \dots \cup G_s| +s-c-1,$$ 
    and $s-c-1 \ge 1$.
    Therefore there exist distinct $a_1, \dots, a_{s-c-1} \in [n-1]$ which do not belong to any of $G_1,\dots,G_s$. 
    Let $F_i':= F_i \cup \{a_i\} \setminus \{n\}$ for $i \le s-c-1$, and $F_i':=F_i$ for $s-c \leq i \leq s$.
    Then, since $\cF$ is shifted, $F_i' \in \cF$ for each $i \in [s]$.
    However, 
    $$|F_1' \cup F_2' \cup \dots \cup F_s'| = |G_1\cup G_2 \cup \dots \cup G_s| + s-c-1+1 = ks - (b-1)(s-1)\, ,$$
    which contradicts our assumption.
    Hence,~\eqref{eq:G} holds.
    
    If $b \le k-1$, then we can apply induction and get that $|\cF(n)| \le (s-1)^b \cdot |\sigma_{k-1-b}(\cF(n))|$.
    Note that $|\cF|=|\cF(\overline{n})|+|\cF(n)|$.
    Moreover, if $A \in \sigma_{k-b}(\cF(\overline{n}))$, then $n \not\in A$ and $A \in \sigma_{k-b}(\cF)$.
    Similarly, if $B \in \sigma_{k-1-b}(\cF(n))$, then $n \not \in B$ and $B \cup \{n\} \in \sigma_{k-b}(\cF)$.
    Therefore 
    $$|\sigma_{k-b}(\cF)| \ge |\sigma_{k-b}(\cF(\overline{n}))| + |\sigma_{k-1-b}(\cF(n))|\, ,$$
    and
    \[
        |\cF|=|\cF(\overline{n})|+|\cF(n)| \le (s-1)^b \cdot \bigg(|\sigma_{k-b}(\cF(\overline{n}))| + |\sigma_{k-1-b}(\cF(n))|\bigg) \le (s-1)^b \cdot |\sigma_{k-b}(\cF)|\, ,
    \]
    which proves~\eqref{eq:shadow_k-b} when $b \le k-1$.
    
    When $b=k$,~\eqref{eq:G} reads as $|G_1 \cup \dots \cup G_s| \le (k-1)s-(k-1)(s-1) - 1 =k-2$, which cannot be satisfied as $|G_1 \cup \dots \cup G_s| \ge |G_1|=k-1$. Therefore $\cF(n)=\emptyset$ and
    \[
        |\cF|=|\cF(\overline{n})| \le (s-1)^b \cdot |\sigma_{0}(\cF(\overline{n}))| = (s-1)^b \cdot |\sigma_{0}(\cF)|\, ,
    \]
    which proves~\eqref{eq:shadow_k-b} in the remaining case $b=k$.
\endproof

We can now prove Theorem~\ref{thm:approxmatch}.

\lateproof{Theorem~\ref{thm:approxmatch}}
Suppose the statement is false.
Then for any $F_1,\dots,F_s\in \cF$, we have
$|F_1\cup F_2 \cup \dots \cup F_s|< (k-C)s$ and, choosing $b:=C+1$, we have $(k-C)s \le ks - (b-1)(s-1) -1$. Moreover, since $1/n \ll 1/k,\alpha$, it holds that $s \ge 2$ and thus we can use Theorem~\ref{thm:shadow} (applied with the above choice of $b$) and we have
$$|\sigma_{k-b}(\cF)|\ge \frac{|\cF|}{(s-1)^b}.$$
In particular,
$$|\sigma_{k-b}(\cF)|\ge \frac{\frac{\beta}{2}\frac{n^k}{k!}}{(1-\alpha)^b (n/k)^b} \ge \frac{\beta}{2(1-\alpha)^b} \binom{n}{k-b} > \binom{n}{k-b}.$$
where the first inequality follows from $s-1\le (1-\alpha)\frac{n}{k}$ and $|\cF|\ge \beta \binom{n}{k} \ge \frac{\beta}{2}\frac{n^k}{k!}$, the second inequality from $\frac{k^b}{k!}\ge \frac{1}{(k-b)!}$ and the last one uses that $b=C+1$ and $1/C \ll \alpha, \beta$.
This is a contradiction to the trivial upper bound $|\sigma_{k-b}(\cF)|\le \binom{n}{k-b}$.
\endproof

We conclude this section by proving Theorem~\ref{thm:deficiency}, the defect version of the Alon--Frankl--Lov\'asz theorem. 

\lateproof{Theorem~\ref{thm:deficiency}}
First observe that we can assume that $\mu \ll 1/r,1/q$.
Then choose new constants $k,C$ such that $k \in \mathbb{N}$ and $1/n\ll \eps \ll 1/k \ll 1/C \ll \mu \ll 1/r,1/q$.
Let $\cF\In \binom{V(G)}{k}$ be the family of $k$-subsets $F\In V(G)$ for which $G[F]$ is the complete $r$-graph on $F$.
\begin{claim}
    We have $|\cF|\ge \frac{1}{2}\binom{n}{k}$.
\end{claim}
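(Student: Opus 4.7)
The plan is a straightforward double-counting argument on the non-edges of $G$. Let me denote the complement by $\overline{G}$, an $r$-graph on $V(G)$ with $e(\overline{G}) \le \eps \binom{n}{r}$. A $k$-set $F \in \binom{V(G)}{k}$ fails to belong to $\cF$ if and only if some $r$-subset of $F$ is a non-edge, i.e.\ an edge of $\overline{G}$.

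First I would count the pairs $(e, F)$ with $e \in E(\overline{G})$, $F \in \binom{V(G)}{k}$, and $e \subseteq F$. Each non-edge $e$ is contained in exactly $\binom{n-r}{k-r}$ many $k$-subsets, and each bad $k$-subset is counted at least once. Using the identity $\binom{n}{r}\binom{n-r}{k-r} = \binom{n}{k}\binom{k}{r}$, the number of bad $k$-subsets is at most
\[
  e(\overline{G})\binom{n-r}{k-r} \le \eps\binom{n}{r}\binom{n-r}{k-r} = \eps\binom{k}{r}\binom{n}{k}.
\]
Since the hierarchy gives $\eps \ll 1/k$, and in particular $\eps \binom{k}{r} \le 1/2$, we conclude
\[
  |\cF| \ge \binom{n}{k} - \eps\binom{k}{r}\binom{n}{k} \ge \tfrac{1}{2}\binom{n}{k},
\]
as required. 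There is no serious obstacle here; the only thing to check is that the chosen hierarchy $\eps \ll 1/k$ is strong enough to dominate the combinatorial factor $\binom{k}{r}$, which it trivially is since $\binom{k}{r}$ depends only on $k$ and $r$.
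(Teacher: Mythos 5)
Your double-counting argument is correct and is essentially the same as the paper's proof, which simply phrases the identical computation probabilistically: choose a uniformly random $k$-set $F$, bound the expected number of non-edges in $G[F]$ by $\eps\binom{k}{r}\le 1/2$, and apply Markov's inequality. The binomial identity you use is exactly what makes the expectation computation work there too, so the two arguments coincide step for step.
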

\claimproof
    Let $F$ be a uniformly chosen subset of $V(G)$ of size $k$ and observe that it is enough to show that $\PP[F \not\in \cF] \le 1/2$, where we recall that $F \not\in \cF$ is equivalent to the condition that $G[F]$ contains a non-edge.    
    Given $r$ vertices $v_1, \dots, v_r$, we have $\PP[v_1, \dots, v_r \in F] = \binom{n-r}{k-r} \cdot \binom{n}{k}^{-1}$ and thus the expected number of non-edges in $G[F]$ is at most $\eps \cdot \binom{n}{r} \cdot \binom{n-r}{k-r} \cdot \binom{n}{k}^{-1} = \eps \binom{k}{r} \le 1/2$, where the inequality uses $\eps \ll 1/k, 1/r$.
    By Markov's inequality, the probability that $G[F]$ contains a non-edge is at most $1/2$, and we are done.
\endclaimproof
Using the Alon--Frankl--Lov\'asz theorem (Theorem~\ref{thm:afl}), for each $F\in \cF$, we find in $G[F]$ a monochromatic matching, which we denote by $M_F$, of size at least $\lfloor \frac{k}{r+q-1}\rfloor \ge (1-\mu^2)\frac{k}{r+q-1}$, using $1/k \ll \mu, 1/r, 1/q$. By averaging, there exists a colour $c$ and a subfamily $\cF'\In \cF$ of size at least $\frac{|\cF|}{q} \ge \frac{1}{2q}\binom{n}{k}$ such that for each $F\in \cF'$, the monochromatic matching $M_F$ has colour $c$.

Applying Theorem~\ref{thm:approxmatch} to $\cF'$ gives that there exist $F_1,\dots,F_s \in \cF'$ with $|F_1\cup F_2 \cup \dots \cup F_s|\ge (k-C)s$, where $s:=\lceil (1-\mu^2)\frac{n}{k}\rceil$.
We would like to estimate the number of edges appearing in more than one of the matchings $M_{F_i}$ and, for that, we first estimate the number of vertices appearing in more than one of the sets $F_i$.

Let $U:=F_1\cup F_2 \cup \dots \cup F_s$ and note that $|U|\ge (1-2\mu^2)n$. For a vertex $u\in U$, let $d(u):=|\set{i\in [s]}{u\in F_i}|$ and set $W:=\{u \in U: d(u) \ge 2\}$, i.e.~$W$ is the set of vertices which belong to at least two of the sets $F_i$. By double-counting, we have $$|U| + \sum_{u\in W} (d(u)-1) = \sum_{u\in U} d(u) = \sum_{i \in [s]} |F_i| = ks \le n,$$ so $\sum_{u\in W} (d(u)-1) \le n-|U| \le 2\mu^2 n$.
Since $d(u)\le 2(d(u)-1)$ for all $u\in W$, we then deduce that $\sum_{u\in W} d(u) \le 4\mu^2 n$.
Let $M$ be the multiset consisting of all the edges in $M_{F_i}$ for $i \in [s]$ and delete all the edges which contain a vertex from $W$. The remaining edges then form a matching. Note that, for each vertex $u\in W$, we delete at most $d(u)$ edges since for each $i\in [s]$, at most one edge of $M_{F_i}$ contains $u$. Hence, the final matching has size at least
$$s\cdot (1-\mu^2)\frac{k}{r+q-1} - \sum_{u\in W} d(u)  \ge (1-2\mu^2)\frac{n}{k} \cdot \frac{k}{r+q-1} - 4\mu^2 n \ge (1-\mu )\frac{n}{r+q-1}\, ,$$
and it is monochromatic (of colour $c$) by construction.
\endproof

\section{Proof of the transference and the discrepancy theorems}
\label{sec:transference}
The transference theorem (Theorem~\ref{thm:main}) follows from our defect theorem via the multicolour weak sparse hypergraph regularity lemma, which we now state after introducing a few definitions.

Let $H$ be an $r$-graph.
For disjoint vertex sets $X_1,\dots,X_r$ denote by $E(X_1,\dots,X_r)$ the set of edges having exactly one vertex in each $X_i$, $i=1,\dots,r$.
The \defn{density} between $X_1,\dots,X_r$ is
\[
  d(X_1,\dots,X_r) := \frac{|E(X_1,\dots,X_r)|}{|X_1|\cdots|X_r|}.
\]
For $\eps>0$ and $p\in[0,1]$,
an $r$-partite $r$-uniform hypergraph with parts $V_1,\dots,V_r$ is \defn{$(\eps,p)$-regular}
if for every $X_i\subseteq V_i$, $i=1,\dots,r$, with $|X_i|\ge\eps|V_i|$, one has
\[
  |d(X_1,\dots,X_r)-d(V_1,\dots,V_r)| \le \eps p.
\]
A partition $\cP:=(V_1,\dots,V_t)$ of $H$ is called \defn{$(\eps,p)$-regular}
if it is an \defn{equipartition} (i.e., the sizes of the parts differ by at most $1$)
and for all but at most $\eps\binom{t}{r}$ of the $r$-sets $(V_{i_1},\dots,V_{i_r})$,
the induced $r$-partite $r$-uniform hypergraph with parts $V_{i_1},\dots,V_{i_r}$ is $(\eps,p)$-regular.
Moreover we say that $t$ is the \emph{order} of $\cP$.
For $\eta>0$ and $D>1$, we say that $H$ is \defn{$(\eta,p,D)$-upper-uniform}
if for any disjoint vertex sets $X_1,\dots,X_r$ with $|X_i|\ge\eta|V(H)|$
one has $d(X_1,\dots,X_r)\le Dp$.

We can now state the lemma we need. 
The proof is a straightforward adaptation of a proof of the sparse regularity lemma for graphs (see, for example, the survey of Gerke and Steger~\cite{SS_survey:05}).
We provide its proof for completeness in Appendix~\ref{appendix:regularity}.
\begin{lemma}\label{lem:reg:clr}
  Let $\eta \ll 1/T \ll 1/t_0 \ll \eps, 1/D, 1/q, 1/r$, with $T,D,q,t_0,r \in \mathbb{N}$ and $D > 1$.
  Then for every $p\in[0,1]$,
  if $H_1,\dots,H_q$ are $(\eta,p,D)$-upper-uniform $r$-graphs on the same vertex set $V$
  with $|V|\ge t_0$
  then there exists a partition $V_1,\dots,V_t$ of $V$
  with $t_0\le t\le T$
  which is $(\eps,p)$-regular with respect to $H_j$ for all $j=1,\dots,q$.
\end{lemma}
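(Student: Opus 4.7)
The plan is to adapt the classical index/energy-increment proof of Szemer\'edi's regularity lemma to the sparse multicolour hypergraph setting, along the lines of the graph case in~\cite{SS_survey:05}. For an equipartition $\cP=(V_1,\dots,V_t)$ of $V$ and a colour $j\in[q]$, define the normalised mean-square density
$$q_j(\cP) := \frac{1}{p^2 t^r}\sum_{(i_1,\dots,i_r)} d_{H_j}(V_{i_1},\dots,V_{i_r})^2,$$
where the sum is over ordered $r$-tuples of pairwise distinct indices, and set $Q(\cP):=\sum_{j=1}^q q_j(\cP)$. Since $t \le T$ throughout and $\eta \ll 1/T$, each part has size at least $\eta|V|$, so by $(\eta,p,D)$-upper-uniformity every density $d_{H_j}(V_{i_1},\dots,V_{i_r})$ is at most $Dp$, giving $Q(\cP) \le qD^2$.

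Next I would establish two standard facts. \emph{Monotonicity}: if $\cQ$ refines $\cP$, then $q_j(\cQ) \ge q_j(\cP)$ for every $j$, via Cauchy--Schwarz applied inside each tuple of parts. \emph{Increment}: if $\cP$ fails to be $(\eps,p)$-regular with respect to some $H_j$, there is a refinement $\cQ$ of $\cP$ with $Q(\cQ) \ge Q(\cP) + \delta$, where $\delta = \delta(\eps,r)>0$. The refinement is built by, for each of the at least $\eps\binom{t}{r}$ irregular $r$-tuples $(V_{i_1},\dots,V_{i_r})$, extracting witness subsets $X_{i_\ell}\subseteq V_{i_\ell}$ of size at least $\eps|V_{i_\ell}|$ for which the density differs by more than $\eps p$ from $d_{H_j}(V_{i_1},\dots,V_{i_r})$, and then partitioning each $V_i$ into the atoms generated by all witness sets indexed by $i$. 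A Cauchy--Schwarz computation analogous to the graph case shows that each irregular tuple contributes at least $\eps^2$ (after $p^2$-normalisation) to $Q$; summing over all irregular tuples yields $\delta\ge\eps^c$ for some constant $c=c(r)$.

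I would then iterate. Start with an arbitrary equipartition of $V$ into $t_0$ parts. At each step, if the current equipartition is $(\eps,p)$-regular for all $H_1,\dots,H_q$ then stop; otherwise apply the increment lemma to obtain a refinement, and re-equipartition by combining parts up to small exceptional remainders whose total relative size is negligible because $1/T\ll 1/t_0$. Since $Q\le qD^2$ and each iteration raises $Q$ by at least $\delta$, the process terminates after at most $\lceil qD^2/\delta\rceil$ rounds. The resulting number of parts is bounded by a tower-type function $T=T(t_0,\eps,q,r,D)$, and with the hierarchy $\eta\ll 1/T\ll 1/t_0\ll \eps,1/D,1/q,1/r$ we simultaneously guarantee $t_0\le t\le T$ and that upper-uniformity remains applicable at every stage.

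The main obstacle will be the hypergraph increment lemma: one must verify cleanly that refining by witness-set atoms yields a quantifiable $Q$-gain, carefully absorbing the errors from the equipartitioning step and from tuples involving parts whose sizes differ by one. The multicolour aspect causes no real difficulty, since a single common refinement can be taken that simultaneously addresses irregular tuples across all $q$ colours; the upper bound on $Q$ merely scales by a factor of $q$, which only enlarges~$T$.
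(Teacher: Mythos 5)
Your proposal is correct and follows essentially the same energy-increment scheme as the paper's proof in Appendix~\ref{appendix:regularity}: define a $p$-normalised mean-square density bounded by $qD^2$ via upper-uniformity, prove monotonicity under refinement and a quantified increment on irregularity, re-equipartition at each step while absorbing the small loss, and iterate. The paper packages the increment step, including the witness-atom refinement and the re-equipartitioning bookkeeping, into its Key Lemma~\ref{lem:key}, with per-tuple gain of order $\eps^{r+2}\alpha(\cX)$ rather than your stated $\eps^{2}$, but this is a cosmetic difference in the choice of normalisation and does not affect the argument.
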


Given an $(\eps,p)$-regular partition $V_1,\dots,V_t$ of $V$ as in Lemma~\ref{lem:reg:clr}, we define the \emph{cluster hypergraph} with respect to $\{V_1,\dots,V_t\}$ as the $r$-graph on $[t]$ with an edge $e:=\{i_1, i_2, \dots, i_r\}$ if and only if there exists $i:=i(e) \in [q]$ such that the induced $r$-partite $r$-subgraph of $H_i$ with parts $V_{i_1}, \dots, V_{i_r}$ is $(\eps,p)$-regular and $d_{H_i}(V_{i_1}, \dots, V_{i_r}) > \eps p$.
The cluster hypergraph inherits a $q$-edge-colouring from $H$ by colouring the edge $e$ in colour $i(e)$ (if there is more than one choice for $i(e)$, we pick one arbitrarily).

We apply Lemma~\ref{lem:reg:clr} to $\bG^{(r)}(n,p)$ and thus we need to show that $\bG^{(r)}(n,p)$ is upper-uniform.

\begin{lemma}
\label{lem:G(n,r,p)}
    Let $1/C \ll \eta, 1/r$ with $r \in \mathbb{N}$ and $r \ge 2$.
    Provided that $p \ge Cn^{-r+1}$, the random hypergraph $G \sim \bG^{(r)}(n, p)$ has w.h.p.\ the following property: for any $r$ pairwise disjoint vertex sets $X_1, \dots, X_r$, each having size at least $\eta n$, it holds that $p/2 \le d(X_1,\dots,X_r) \le 3p/2$.
\end{lemma}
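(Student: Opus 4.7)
The plan is to apply a Chernoff bound followed by a union bound over all choices of $r$ disjoint vertex subsets of the prescribed minimum size. First, I would fix disjoint vertex sets $X_1, \dots, X_r \subseteq [n]$ with $|X_i| \ge \eta n$ for each $i$. The random variable $|E(X_1, \dots, X_r)|$ is a sum of independent Bernoulli random variables — one for each $r$-tuple $(x_1, \dots, x_r) \in X_1 \times \cdots \times X_r$, each with success probability $p$ — and hence follows the binomial distribution with parameters $N := |X_1| \cdots |X_r|$ and $p$. Its expectation is $pN$, and the event we wish to rule out is that the actual value deviates from $pN$ by more than $pN/2$.

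Next, I would apply a standard Chernoff bound of the form
\[
  \PP\brackets{\bigl||E(X_1,\dots,X_r)| - pN\bigr| > pN/2} \le 2\exp(-c \cdot pN),
\]
for some absolute constant $c > 0$. Since $|X_i| \ge \eta n$, we have $N \ge (\eta n)^r$, and by the assumption $p \ge C n^{-r+1}$ we get $pN \ge C\eta^r n$. Therefore the probability that any fixed tuple $(X_1, \dots, X_r)$ is ``bad'' is at most $2\exp(-c C \eta^r n)$.

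Finally, I would take a union bound over all ordered $r$-tuples of disjoint subsets of $[n]$. There are at most $(r+1)^n \le 2^{rn}$ such ordered tuples (assign each vertex to one of $X_1, \dots, X_r$ or to none). Thus the total failure probability is at most
\[
  2^{rn+1} \cdot \exp(-c C \eta^r n) = \exp\bigl((r\log 2 - cC\eta^r) n + O(1)\bigr),
\]
which tends to $0$ as $n \to \infty$ provided $C$ is chosen large enough as a function of $r$ and $\eta$, consistent with the hierarchy $1/C \ll \eta, 1/r$. The main (and only) subtlety is balancing the exponent in the Chernoff bound against the $2^{rn}$ factor from the union bound, which is precisely what forces $C$ to depend on $\eta$ and $r$; but since $pN$ grows linearly in $n$, this is routine.
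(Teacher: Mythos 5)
Your proof is correct and follows essentially the same approach as the paper: a Chernoff bound for the edge count between fixed disjoint sets, followed by a union bound over the at most $2^{rn}$ choices of disjoint sets, with $C$ chosen large enough (depending on $\eta$ and $r$) to make the union bound work. No substantive differences.
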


Lemma~\ref{lem:G(n,r,p)} follows from a simple application of the following version of Chernoff's bound (see e.g.~\cite[Corollary~2.3]{JLR:00}).

\begin{lemma}[Chernoff's inequality]
\label{lem:chernoff}
     Let $X$ be the sum of independent Bernoulli random variables. Then for all $0 < \beta < 1$, we have
     \[
     \PP\Big[|X - \EE[X]| \ge \beta \EE[X]\Big] \le 2\exp\left(-\frac{\beta^2}{3} \EE[X]\right)\, .
     \]
\end{lemma}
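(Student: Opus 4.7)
The plan is to prove Chernoff's inequality via the classical exponential moment (Bernstein/Cramér) approach, splitting into upper and lower tails and combining via a union bound for the factor of $2$. Write $X=\sum_{i=1}^N X_i$ with $X_i$ independent Bernoulli of mean $p_i$, and set $\mu:=\EE[X]=\sum_i p_i$.

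For the upper tail, I would fix a parameter $t>0$ and apply Markov's inequality to $e^{tX}$:
\[
\PP\!\left[X\ge(1+\beta)\mu\right] \le e^{-t(1+\beta)\mu}\,\EE[e^{tX}].
\]
Using independence, $\EE[e^{tX}]=\prod_i \EE[e^{tX_i}]=\prod_i\!\big(1+p_i(e^t-1)\big)$, and the elementary inequality $1+x\le e^{x}$ gives $\EE[e^{tX}]\le \exp\!\big(\mu(e^t-1)\big)$. Optimising by choosing $t=\ln(1+\beta)$ yields
\[
\PP\!\left[X\ge(1+\beta)\mu\right] \le \exp\!\Big(\mu\big(\beta-(1+\beta)\ln(1+\beta)\big)\Big).
\]
The key analytic ingredient is then the calculus fact that $(1+\beta)\ln(1+\beta)-\beta \ge \beta^2/3$ for $0<\beta<1$ (easily checked by verifying the difference vanishes at $\beta=0$ and has nonnegative derivative on $(0,1)$), which produces the bound $\exp(-\beta^2\mu/3)$.

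For the lower tail, the same scheme with $t<0$ (or, equivalently, applied to $-X$ with $t>0$) and the choice $t=-\ln(1-\beta)$ gives
\[
\PP\!\left[X\le(1-\beta)\mu\right] \le \exp\!\Big(\mu\big(-\beta-(1-\beta)\ln(1-\beta)\big)\Big) \le \exp(-\beta^2\mu/2),
\]
where the second step uses $(1-\beta)\ln(1-\beta)+\beta\ge \beta^2/2$ for $0<\beta<1$. Since $\beta^2/2\ge \beta^2/3$, this is at most $\exp(-\beta^2\mu/3)$, and summing the two one-sided bounds produces the factor of $2$ in the statement.

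The main obstacle is really just the choice of the numerical constant: both tails admit slightly sharper exponents ($\beta^2/(2+2\beta/3)$ and $\beta^2/2$ respectively), but one has to pick a single clean constant that works in both directions for all $\beta\in(0,1)$, and $1/3$ is the standard choice that does so. Everything else is routine: Markov's inequality, independence for the moment generating function, the pointwise bound $1+x\le e^x$, and two one-variable calculus inequalities.
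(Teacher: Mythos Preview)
Your proof is correct and follows the standard exponential-moment argument; the two calculus inequalities you quote do hold on $(0,1)$ (for the upper tail one can simply use the alternating-series bound $(1+\beta)\ln(1+\beta)-\beta\ge \beta^2/2-\beta^3/6\ge \beta^2/3$). The paper itself does not prove this lemma at all: it merely states it and cites \cite[Corollary~2.3]{JLR:00}, so there is no ``paper proof'' to compare against --- your write-up is a self-contained substitute for that citation.
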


\lateproof{Lemma~\ref{lem:G(n,r,p)}}
    Consider any $r$ pairwise disjoint vertex sets $X_1, \dots, X_r$ each having size at least $\eta n$. Then $\EE[e(X_1, \dots, X_r)]=p|X_1|\cdot \dots \cdot |X_r| \ge (\eta n)^r p$.
    Therefore, by Chernoff's bound, we have
    \[
        \PP\Bigg[\Big|e(X_1,\dots,X_r) -p|X_1|\cdot \dots \cdot |X_r|\Big|\ge \frac{1}{2} \cdot p |X_1|\cdot \dots \cdot |X_r|\Bigg] \le 2 \exp\left(-\frac{(\eta n)^r}{12} p\right) \le 2 \exp\left(-\frac{C \eta^r}{12} n\right)
    \]
    which, by choosing $C$ large enough, beats the union bound over the at most $2^{rn}$ choices of $X_1, \dots, X_r$.
\endproof

After applying the regularity lemma to $G \sim \bG^{(r)}(n,p)$, we consider the corresponding reduced hypergraph $R$. 
Such a graph is almost complete and inherits an edge-colouring from $G$, by colouring an edge of $R$ in colour $c$ if the $r$-partite subgraph of $G$ induced by the corresponding clusters is dense in colour $c$. In particular, $R$ is suitable for an application of Theorem~\ref{thm:deficiency} and contains a
monochromatic matching of size roughly $\frac{|V(R)|}{r+q-1}$. 
By applying the following standard technique, this monochromatic matching translates into a monochromatic matching of $G$ of size roughly $\frac{|V(G)|}{r+q-1}$, as desired.

\begin{lemma}
\label{lem:from_reduced}
    Let $\eps>0$, $p \in [0,1]$ and $G$ be an $(\eps,p)$-regular $r$-partite $r$-graph with parts $V_1, \dots, V_r$, each of size $m$ or $m+1$, and with $d(V_1,\dots,V_r) > \eps p$. Then $G$ has a matching of size at least $(1 - \eps)m$.
\end{lemma}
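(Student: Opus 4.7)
The plan is to construct the matching greedily, using the $(\eps,p)$-regularity property to locate an edge among the currently unused vertices at each step. Initialise $M := \emptyset$, and at each step let $U_i \subseteq V_i$ denote the set of vertices of $V_i$ not covered by $M$. As long as $|U_i| \ge \eps |V_i|$ for every $i \in [r]$, I would apply the $(\eps,p)$-regularity hypothesis to $U_1,\ldots,U_r$ to obtain
\[
d(U_1,\ldots,U_r) \ge d(V_1,\ldots,V_r) - \eps p > 0,
\]
where the strict inequality uses the assumption $d(V_1,\ldots,V_r) > \eps p$. Consequently $E(U_1,\ldots,U_r)$ is non-empty, so I can pick any such edge $e$, add it to $M$, delete the $r$ vertices of $e$ from the $U_i$, and iterate.

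Each round reduces $|U_i|$ by exactly one for every $i$, so the procedure terminates, say with $|U_j| < \eps|V_j|$ for some $j \in [r]$. At that moment every edge of $M$ has contributed exactly one vertex of $V_j \setminus U_j$, whence
\[
|M| = |V_j| - |U_j| > (1 - \eps)|V_j| \ge (1 - \eps)m,
\]
which gives the bound claimed (since $|M|$ is an integer, this yields $|M| \ge \lceil (1-\eps)m \rceil$).

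This is essentially the textbook greedy-matching argument for regular (hyper)graphs, and I do not expect any genuine obstacle. The one point worth flagging is the role of the hypothesis $d(V_1,\ldots,V_r) > \eps p$: the $\eps p$-additive error coming from regularity is precisely what must be overcome to conclude that $d(U_1,\ldots,U_r) > 0$, so this assumption is tight for the greedy step and cannot be weakened without substantially revising the approach.
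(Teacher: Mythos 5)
Your proof is correct and takes essentially the same approach as the paper: the paper phrases it as a contradiction argument starting from a maximum matching with $|M| < (1-\eps)m$, whereas you argue constructively via greedy extension, but both hinge on the identical use of $(\eps,p)$-regularity applied to the sets of uncovered vertices.
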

\begin{proof}
    Let $M$ be a maximum matching of $G$ and suppose that $|M| < (1-\eps)m$.
    Let $X_i:=V_i \setminus V(M)$ be the set of uncovered vertices in $V_i$ and observe that $|X_i| \ge \eps m$ for each $i \in [r]$.
    Then, by the definition of $(\eps,p)$-regularity, $d(X_1,\dots,X_r) \ge d(V_1,\dots,V_r) - \eps p >0$ and thus the sets $X_1,\dots,X_r$ span at least one edge, contradicting the maximality of~$M$.
\end{proof}

We are now ready to prove Theorem~\ref{thm:main}.
\lateproof{Theorem~\ref{thm:main}}
    Let $\eps, \eta>0$ and $t_0,T \in \mathbb{N}$ be new constants such that
    \[
        1/C \ll \eta \ll 1/T \ll 1/t_0 \ll \eps \ll \mu, 1/r, 1/q.
    \]
    Then let $p \ge C n^{-r+1}$ and let $G \sim \bG^{(r)}(n,p)$.
    By Lemma~\ref{lem:G(n,r,p)}, w.h.p.\ $G$ has the property that for any $r$ pairwise disjoint vertex sets $X_1, \dots, X_r$, each having size at least $\eta n$, it holds that
    \begin{equation}
    \label{eq:density}
        p/2 \le d_G(X_1,\dots,X_r) \le 3p/2\, .
    \end{equation}
    Fix now such a $G$ and consider a $q$-colouring of the edges of $G$ with colours in $[q]$. For each $i \in [q]$, let $G_i$ be the spanning subgraph of $G$ consisting of the edges of colour $i$.
    Observe that by~\eqref{eq:density}, $G$ is $(\eta,p,3/2)$-upper-uniform and thus each $G_i$ is $(\eta,p,3/2)$-upper-uniform as well.

    By the weak sparse hypergraph regularity lemma (Lemma~\ref{lem:reg:clr}), there exists a partition $V_1, \dots, V_t$ of $V(G)$ with $t_0 \le t \le T$ which is $(\eps,p)$-regular with respect to $G_i$ for each $i \in [q]$.
    Let $R$ be the ($q$-edge-coloured) cluster hypergraph associated with this partition.
    Observe that, by the definition of an $(\eps,p)$-regular partition, all but at most $q \cdot \eps \binom{t}{r}$ of the sets $\{i_1, i_2, \dots, i_r\} \subseteq [t]$ are such that the induced $r$-partite $r$-subgraph of $H_i$ with parts $V_{i_1}, \dots, V_{i_r}$ is $(\eps,p)$-regular for each $i \in [q]$.
    Moreover, by~\eqref{eq:density}, for any such $r$-set $\{i_1, i_2, \dots, i_r\}$ it holds that $d_G(V_{i_1}, \dots, V_{i_r}) \ge p/2$ and thus, by averaging, there exists $i \in [q]$ such that $d_{G_i}(V_{i_1}, \dots, V_{i_r}) \ge p/(2q) > \eps p$.
    Therefore $e(R) \ge (1-q \cdot \eps) \binom{t}{r}$.

     By Theorem~\ref{thm:deficiency} (applied with $\mu/2$ playing the role of $\mu$), $R$ contains a monochromatic matching of size at least $(1-\frac{\mu}{2}) \frac{t}{r+q-1}$, say of colour $1$.
     By Lemma~\ref{lem:from_reduced}, each edge $\{i_1,\dots,i_r\}$ of $M$ gives rise (in $G$) to a monochromatic matching (of colour $1$) of size at least $(1-\eps)\lfloor n/t \rfloor$ between the sets $V_{i_1},\dots,V_{i_r}$. Moreover, these matchings  (for different choices of $\{i_1,\dots,i_r\}$) are pairwise vertex-disjoint.
     Therefore the union of these matchings is a monochromatic matching (of colour~$1$) in $G$ of size at least $(1-\frac{\mu}{2}) \frac{t}{r+q-1} \cdot (1-\eps)\lfloor \frac{n}{t} \rfloor \ge (1-\mu) \frac{n}{r+q-1}$, as wanted.
\endproof

Finally we prove Theorem~\ref{thm:perfect_matching}, which follows by combining our Theorem~\ref{thm:main} and (a special case of) a result due to Johansson, Kahn, and Vu~\cite{JKV:08}, which we state below.

\begin{theorem}[Johansson, Kahn, and Vu~\cite{JKV:08}]
\label{thm:JKV}
    For all $r \in \mathbb{N}$ with $r \ge 2$, there exists $C >0$ such that, provided $p \ge Cn^{-r+1}\log n$, w.h.p.\ $\bG^{(r)}(n,p)$ has a perfect matching.
\end{theorem}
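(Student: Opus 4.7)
The plan is to derive the theorem from the recently-proven expectation-threshold conjecture of Kahn--Kalai (the Park--Pham theorem). The conclusion only makes sense when $r \mid n$, so assume this throughout. Let $X := \binom{[n]}{r}$ and let $\mathcal{M}$ denote the set of perfect matchings of $K_n^{(r)}$, so that $|\mathcal{M}| = N_n := n!/((n/r)!\,(r!)^{n/r})$. Let $\mu$ be the uniform probability measure on $\mathcal{M}$, viewed as a distribution on $\binom{X}{n/r}$.

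The first key step is to show that $\mu$ is \emph{$q$-spread} on $X$ with $q = C_r\, n^{-(r-1)}$ for a constant $C_r$ depending only on $r$, i.e.\ for every $T = \{e_1,\ldots,e_s\} \subseteq X$,
\[
    \mu\bigl(\{M \in \mathcal{M} : T \subseteq M\}\bigr) \le q^s.
\]
If $T$ is not a (partial) matching this is vacuous; otherwise the number of perfect matchings containing $T$ equals $N_{n-sr}$, so the left-hand side equals $N_{n-sr}/N_n$. A short calculation (using $(n/r)(n/r-1)\cdots(n/r-s+1) \le (n/r)^s$ for the numerator and $n(n-1)\cdots(n-sr+1) \ge (n/2)^{sr}$ for the denominator when $s \le n/(2r)$, together with a direct Stirling estimate in the regime when $s$ is close to $n/r$) shows that this ratio is at most $(C_r/n^{r-1})^s$, giving the claimed spreadness.

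The second step is to invoke the Park--Pham theorem: if $\mu$ is a $q$-spread probability measure supported on $\binom{X}{\ell}$, then for some absolute constant $K$ the random $p$-subset $X_p$ of $X$ contains some element of $\mathcal{M}$ w.h.p.\ whenever $p \ge K q \log \ell$. In our setting $X_p = \bG^{(r)}(n,p)$, $\ell = n/r$, and $q = C_r n^{-(r-1)}$, so choosing $C = C(r)$ large enough yields that $\bG^{(r)}(n,p)$ w.h.p.\ contains a perfect matching whenever $p \ge C n^{-(r-1)} \log n$, which is exactly the conclusion of the theorem.

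The main obstacle is of course the deep input above: the spread computation is elementary, but the passage from a spread measure to a threshold bound is precisely the content of the Park--Pham theorem, whose proof uses a subtle ``minimum fragment'' / iterative coupling argument. A self-contained alternative is the original Johansson--Kahn--Vu approach via the entropy method (Shearer-type inequalities combined with a nibble-style iterative exposure of $\bG^{(r)}(n,p)$), which avoids invoking Park--Pham but is substantially heavier.
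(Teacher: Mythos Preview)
The paper does not prove this theorem: it is quoted verbatim as a known result from \cite{JKV:08} and used as a black box in the proof of Theorem~\ref{thm:perfect_matching}. There is therefore no ``paper's own proof'' to compare against.

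That said, your sketch is correct and is by now a standard alternative derivation of the Johansson--Kahn--Vu threshold. The computation that the uniform measure on perfect matchings of $K_n^{(r)}$ is $O_r(n^{-(r-1)})$-spread is exactly as you describe: for a partial matching $T$ of size $s$ one has
\[
\mu(\{M:T\subseteq M\}) = \frac{N_{n-sr}}{N_n} = \frac{(r!)^s\,(n/r)_s}{(n)_{sr}},
\]
and the two-range argument (crude product bounds for $s\le n/(2r)$, Stirling for $s$ close to $n/r$) gives the bound $(C_r n^{-(r-1)})^s$. The Park--Pham theorem then yields the threshold $O(n^{-(r-1)}\log(n/r))$, which is what is needed. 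This route is of course anachronistic relative to \cite{JKV:08}, whose original proof proceeds via Shearer-type entropy inequalities and an iterative exposure; the spread argument is far shorter but imports a theorem proved more than a decade later. For the purposes of the present paper either citation suffices, since the result is only invoked, not proved.
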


\lateproof{Theorem~\ref{thm:perfect_matching}}
    Let $C:=C(q,r,\mu)$ be a large enough constant.
    We expose the random hypergraph $G \sim \bG^{(r)}(n,C n^{-r+1}\log n)$ in two stages, and will after each step fix an outcome that holds with high probability.
    For that, pick $C'$ such that $(1-Cn^{-r+1})(1-C'n^{-r+1}\log n) = (1-Cn^{-r+1}\log n)$ (then $C' \ge C/2$), and let $G_1 \sim \bG^{(r)}(n,Cn^{-r+1})$ and $G_2 \sim \bG^{(r)}(n,C'n^{-r+1}\log n)$ be independent binomial random $r$-graphs on $[n]$.
    Then $G \sim G_1 \cup G_2$.
    
    Fix a $q$-colouring of the $k$-subsets of $[n]$.
    Reveal the edges of $G_1$.
    Then, by Theorem~\ref{thm:main}, w.h.p.\ $G_1$ contains a monochromatic matching of size $\ceil{(1-\mu)\frac{n}{r+q-1}}$.
    We fix such an outcome, and let $M_1$ denote this matching and $W:=[n] \setminus V(M_1)$.

    Reveal now the edges of $G_2[W]$.
    Observe $n^{-r+1}\log n = \Omega_{r,q,\mu}(|W|^{-r+1} \log |W|)$ since $|W|=\Theta_{r,q,\mu}(n)$, and thus, by Theorem~\ref{thm:JKV}, w.h.p.\ $G_2[W]$ has a perfect matching.
    We fix such an outcome and let $M_2$ denote this matching.

    This concludes the proof as $M_1 \cup M_2$ is a perfect matching of $G$ with at least $|M_1| \ge (1-\mu)\frac{n}{r+q-1}$ edges of the same colour, as desired.
\endproof

Recently Kahn~\cite{kahn:23} determined the sharp threshold for the existence of a perfect matching in hypergraphs, proving that the conclusion of Theorem~\ref{thm:JKV} holds with $C=(r-1)!+\eps$ for any $\eps>0$, and that the constant $(r-1)!$ cannot be improved.
We conjecture that Theorem~\ref{thm:perfect_matching} is true already at the sharp threshold, namely that, for any $r,q \in \mathbb{N}$ and $\eps, \mu >0$, its conclusion holds for $C= (r-1)!+\eps$.
Note that, in order to achieve that, only replacing Theorem~\ref{thm:JKV} with the sharp version is not enough: indeed, using the same notation as in the proof, since the size of $W$ is much smaller than $n$, having $p  = ((r-1)! + \varepsilon)n^{-r+1} \log n$ is not enough to guarantee a perfect matching in $G_2[W]$.

\section{Concluding remarks}
\label{sec:remarks}

In this paper, we proved a transference and a defect version of the Alon--Frankl--Lov\'asz theorem. 
It would be very interesting to characterize the extremal colourings for the AFL Theorem, that is, those $q$-colourings for which the bound in Theorem~\ref{thm:afl} is tight. In the graph case $r=2$, the extremal colourings were characterized in~\cite{XYZ:20} using the Gallai--Edmonds decomposition theorem. 
Closely related to this, it would be desirable to have a stability version, which should say that any $q$-colouring for which the largest monochromatic matching has size at most $(1+\mu)\frac{n}{r+q-1}$ must be $\eps$-close to one of the extremal examples, that is, by recolouring at most $\eps n^r$ edges, we obtain one of the extremal examples.

We point out that the colouring described after Theorem~\ref{thm:afl} is definitely not the only extremal example. For instance, let $x_1,\dots,x_q$ be any positive integers such that $x_1+\dots+x_q=r+q-1$.
(The choice $x_1=\dots=x_{q-1}=1$ and $x_q=r$ corresponds to the construction in Section~\ref{sec:intro}.)
Then partition $[n]$ into sets $V_1,\dots,V_q$ such that $|V_i|=x_i \cdot \frac{n}{r+q-1}$ (we are ignoring rounding issues here).
For any edge $e \in K_n^{(r)}$, there must be $i \in [q]$ such that $|e\cap V_i|\ge x_i$. (If not, $|e|\le \sum_{i=1}^{q} |e\cap V_i|\le \sum_{i=1}^{q}(x_i-1) = r+q-1-q<r$.)
If there are multiple such $i$, just pick one arbitrarily.
Then colour $e$ with colour~$i$. 
Observe that, for every $i \in [q]$, every edge with colour $i$ intersects $V_i$ in at least $x_i$ vertices and thus any matching in colour $i$ has size at most $|V_i|/x_i=\frac{n}{r+q-1}$.

One of the referees raised the following additional problem, which seems still open:
Given $r,q \in \bN$ with $r,q \ge 2$, what is the threshold above which w.h.p.\ every $q$-colouring of the edges of $\bG^{(r)}(n, p)$ contains a monochromatic matching of the same size as that given by the Alon--Frankl--Lov\'asz theorem?
That is, which $p$ allows one to remove the error term in Theorem~\ref{thm:main}?
We can observe the following for the graph case with two colours (corresponding to $r=q=2$) where, assuming for simplicity that $n$ is divisible by $3$, we aim for a monochromatic matching of size $n/3$.
Corollary~1.4 in~\mbox{
\cite{ORR:18} }\hskip0pt
implies that $p>2/3$ suffices.
On the other hand, we can easily observe that the threshold has to be at least constant.
Indeed, suppose that an $n$-vertex graph $G$ has two distinct non-adjacent vertices, say $v$ and $w$, of degree smaller than $n/6$.
Then partition the vertex set in two sets $A,B$ with $|A|=n/3$ such that $v$ and its neighbours are in $A$, $w \in B$ and the neighbours of $w$ are in $A$.
Colour by red all the edges in $B$ and by blue every other edge.
Then $w$ is isolated in $G[B]$ and any blue matching saturating $A$ must use an edge inside $A$ to cover $v$.
Therefore $G$ has no monochromatic matching of size $n/3$.

\section*{Acknowledgements}

We thank the anonymous referees for their valuable comments.

\bibliographystyle{amsplain_v2.0customized}
\bibliography{References}

\appendix

\section{Proof of Lemma~\ref{lem:reg:clr}}
\label{appendix:regularity}
\newcommand{\enr}{\mathcal{E}}

\textbf{Notation and definitions.} We introduce some notation.
We fix a ground set $V$ throughout the appendix.
For a family $\cX=\{X_1,\dots,X_\ell\}$ of pairwise disjoint non-empty subsets of $V$,
write $\pi(\cX):=\prod_{i=1}^\ell |X_i|$ and $\alpha(\cX):=\prod_{i=1}^{\ell} \frac{|X_i|}{|V|}=|V|^{-|\cX|} \cdot \pi(\cX)$.
Moreover, for a set $\cZ$ and an integer $r$, when we write $\binom{\cZ}{r}$ we implicitly assume that $r \le |\cZ|$.

We start from the following easy lemma.
\begin{lemma}\label{lem:alpha}
  Let $\cZ=\{Z_1,\dots,Z_\ell\}$ be a family of pairwise disjoint subsets of $V$.
  Then
  \[
    \sum_{\cX\in\binom{\cZ}{r}}\alpha(\cX)\le \left(\frac{|Z_1 \cup \dots \cup Z_\ell|}{|V|}\right)^r\, .
  \]
\end{lemma}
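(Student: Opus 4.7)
The plan is to reduce the inequality to a standard comparison between the elementary symmetric polynomial $e_r$ and a power of the sum. Set $a_i := |Z_i|/|V| \ge 0$ for $i=1,\dots,\ell$, so that by definition
\[
\sum_{\cX\in\binom{\cZ}{r}}\alpha(\cX) \;=\; \sum_{S\in\binom{[\ell]}{r}} \prod_{i\in S} a_i \;=\; e_r(a_1,\dots,a_\ell),
\]
while the right-hand side of the claimed inequality equals $(a_1+\dots+a_\ell)^r$ because the $Z_i$ are pairwise disjoint, so $|Z_1\cup\cdots\cup Z_\ell|/|V|=\sum_i a_i$.

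Next I would expand $(a_1+\dots+a_\ell)^r$ via the multinomial theorem, writing it as the sum of $a_{i_1}a_{i_2}\cdots a_{i_r}$ over all ordered tuples $(i_1,\dots,i_r)\in[\ell]^r$. The contribution of tuples with pairwise distinct entries is exactly $r!\cdot e_r(a_1,\dots,a_\ell)$, since each unordered $r$-subset is counted once for each of its $r!$ orderings. All remaining terms (from tuples with a repeated index) are non-negative because $a_i\ge 0$. Therefore
\[
\left(\sum_{i=1}^{\ell} a_i\right)^r \;\ge\; r!\cdot e_r(a_1,\dots,a_\ell) \;\ge\; e_r(a_1,\dots,a_\ell),
\]
which is exactly the desired inequality.

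There is essentially no obstacle: the statement is a one-line consequence of expanding a power and discarding non-negative terms. The only mild care needed is to rewrite the ratio $|Z_1\cup\cdots\cup Z_\ell|/|V|$ as $\sum_i |Z_i|/|V|$ using the pairwise-disjointness of the $Z_i$, after which the inequality is purely symbolic.
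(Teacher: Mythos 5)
Your proof is correct and takes essentially the same approach as the paper: both reduce the claim, via disjointness, to the elementary inequality $e_r(a_1,\dots,a_\ell)\le(\sum_i a_i)^r$ for nonnegative $a_i$. The paper simply states this inequality without justification; you additionally supply the multinomial-expansion argument (and even note the stronger bound with the factor $r!$), which is a small elaboration rather than a different route.
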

\begin{proof}
  It is enough to observe that
  \[
    \sum_{\cX\in\binom{\cZ}{r}}\alpha(\cX) = |V|^{-r} \cdot \sum_{1 \le i_1 < \dots <i_r \le \ell} (|Z_{i_1}| \cdot \ldots \cdot |Z_{i_r}|) \le |V|^{-r} \cdot (|Z_1|+|Z_2|+\dots+|Z_\ell|)^r\, .
  \]
\end{proof}
Also observe that if $\cP$ is an equipartition of $V$,
then for every $\cX\in\binom{\cP}{r}$,
\begin{equation}\label{eq:equi:alpha}
  \alpha(\cX) = |V|^{-r}\cdot \pi(\cX)
  \ge \left(\frac{1}{|V|}\cdot\floor{\frac{|V|}{|\cP|}}\right)^r
  \ge (2|\cP|)^{-r}.
\end{equation}

Given an $n$-vertex $r$-graph $H$ on $V$ and a family $\cX:=\{X_1,\dots,X_r\}$ of $r$ pairwise disjoint subsets of $V$, we define $e_H(\cX):=|E(X_1,\dots,X_r)|$,
\[
    d^p_H(\cX):=\frac{e_H(\cX)}{p \cdot \pi(\cX)} \quad \text{and} \quad \enr_H^p(\cX) := \alpha(\cX)\cdot\left(d_H^p(\cX)\right)^2\, .
\]
Moreover, for a family $\cP$ of at least $r$ pairwise disjoint subsets of $V$, we let
\[
  \enr_H^p(\cP) := \sum_{\cX\in\binom{\cP}{r}} \enr_H^p(\cX).
\]
The quantity $\enr_H^p(\cP)$ is often called the \defn{energy} of $\cP$.
We omit the subscript $H$ when it is clear from the context.
Moreover, we shall fix $p$ from now on and often omit it from the notation, understanding $d(\mathcal{X}), \mathcal{E}(\mathcal{X})$ to mean $d^p(\mathcal{X}), \mathcal{E}^p(\mathcal{X})$, etc. 
\medskip

\noindent \textbf{General properties of $\enr_H^p(\cX)$.}
We start by understanding how the function $\enr_H^p(\cX)$ changes if we refine the sets in $\cX$.
\newcommand{\vect}[1]{\mathbf{#1}}
\begin{lemma}\label{lem:dev}
  Let $H$ be an $r$-graph.
  Let $\cX=\{X_1,\dots,X_r\}$ be a family of pairwise disjoint subsets of $V$.
  Assume that for every $i\in[r]$,
  $X_i$ is partitioned into $\ell_i$ subsets $\{X_i^j\}_{j=1}^{\ell_i}$.
  For every $\vect{j}=(j_1,\dots,j_r)\in\prod_i[\ell_i]$,
  let $\cX_{\vect{j}}:=\{X_1^{j_1},\dots,X_r^{j_r}\}$.
  Let $\beta_{\vect{j}}:=\pi(\cX_{\vect{j}})/\pi(\cX)$
  and $\eps_{\vect{j}}:=d(\cX_{\vect{j}})-d(\cX)$.\footnote{We remark that the $\eps_{\vect{j}}$ can take both positive and negative values.}
  Then, we have
  \[
    \sum_{\vect{j}} \enr(\cX_{\vect{j}})
    = \enr(\cX) + \alpha(\cX) \sum_{\vect{j}}\beta_{\vect{j}}\eps_{\vect{j}}^2\, .
  \]
  In particular, 
  $\sum_{\vect{j}} \enr(\cX_{\vect{j}}) \geq \enr(\cX)$.
\end{lemma}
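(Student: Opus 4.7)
The plan is a direct computation, essentially a weighted variance identity: the quantity $\enr(\cX_{\vect{j}})/\alpha(\cX)$ behaves like the square of a random variable (density) under the probability distribution given by the weights $\beta_{\vect{j}}$, whose mean is $d(\cX)$; the identity is then $\mathbb{E}[X^2]=(\mathbb{E}[X])^2+\mathrm{Var}(X)$.

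More concretely, first I would record two elementary identities coming from the fact that the family $\{\cX_{\vect{j}}\}_{\vect{j}}$ refines $\cX$. The volume identity
\[
\sum_{\vect{j}}\pi(\cX_{\vect{j}}) = \sum_{j_1,\dots,j_r}\prod_{i=1}^{r}|X_i^{j_i}| = \prod_{i=1}^{r}\Big(\sum_{j_i}|X_i^{j_i}|\Big) = \prod_{i=1}^{r}|X_i| = \pi(\cX)
\]
gives $\sum_{\vect{j}}\beta_{\vect{j}}=1$. The edge identity, which uses that every edge $e\in E(X_1,\dots,X_r)$ lies in exactly one $E(X_1^{j_1},\dots,X_r^{j_r})$, yields $\sum_{\vect{j}} e_H(\cX_{\vect{j}}) = e_H(\cX)$; dividing by $p\,\pi(\cX)$ rewrites this as $\sum_{\vect{j}}\beta_{\vect{j}}\, d(\cX_{\vect{j}}) = d(\cX)$, i.e., $\sum_{\vect{j}}\beta_{\vect{j}}\eps_{\vect{j}}=0$.

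With those in hand, I would expand, using $\alpha(\cX_{\vect{j}}) = \beta_{\vect{j}}\,\alpha(\cX)$ and $d(\cX_{\vect{j}}) = d(\cX)+\eps_{\vect{j}}$:
\begin{align*}
\sum_{\vect{j}} \enr(\cX_{\vect{j}})
&= \alpha(\cX)\sum_{\vect{j}}\beta_{\vect{j}}\bigl(d(\cX)+\eps_{\vect{j}}\bigr)^2 \\
&= \alpha(\cX)\,d(\cX)^2\sum_{\vect{j}}\beta_{\vect{j}} \;+\; 2\alpha(\cX)\,d(\cX)\sum_{\vect{j}}\beta_{\vect{j}}\eps_{\vect{j}} \;+\; \alpha(\cX)\sum_{\vect{j}}\beta_{\vect{j}}\eps_{\vect{j}}^2.
\end{align*}
The first term is $\enr(\cX)$ by the volume identity, the middle term vanishes by the edge identity, and the last term is exactly the claimed correction $\alpha(\cX)\sum_{\vect{j}}\beta_{\vect{j}}\eps_{\vect{j}}^2$. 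The ``in particular'' statement is then immediate since $\beta_{\vect{j}},\alpha(\cX)\ge 0$.

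There is no real obstacle: the content is purely that the density is a weighted average of the refined densities (which is where the ``upper-uniform'' bound is not needed). The only point to be careful about is bookkeeping of the refinement identity $\sum_{\vect{j}}e_H(\cX_{\vect{j}})=e_H(\cX)$, which uses that the parts $\{X_i^j\}_j$ of each $X_i$ are disjoint and cover $X_i$, so that edges with one endpoint in each $X_i$ are partitioned by the cells $(X_1^{j_1},\dots,X_r^{j_r})$.
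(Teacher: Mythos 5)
Your proof is correct and follows essentially the same route as the paper's: establish $\sum_{\vect{j}}\beta_{\vect{j}}=1$, $\sum_{\vect{j}}\beta_{\vect{j}}\eps_{\vect{j}}=0$, and then expand the square. The variance-identity framing is a nice way to see it, but the computation is identical.
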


\begin{proof}
  Note that for every $\vect{j}$, $\alpha(\cX_{\vect{j}})=\alpha(\cX)\cdot\beta_{\vect{j}}$,
  and that $\sum_{\vect{j}}\beta_{\vect{j}}=1$.
  Note also that
  \[\begin{aligned}
    d(\cX) &= \frac{e(\cX)}{p \cdot \pi(\cX)}
           = \sum_{\vect{j}} \frac{e(\cX_{\vect{j}})}{p \cdot \pi(\cX)}
           = \sum_{\vect{j}} \beta_{\vect{j}}\cdot\frac{e(\cX_{\vect{j}})}{p \cdot \pi(\cX_{\vect{j}})}
           = \sum_{\vect{j}} \beta_{\vect{j}} \cdot d(\cX_{\vect{j}})\\
           &= \sum_{\vect{j}} \beta_{\vect{j}}(d(\cX)+\eps_{\vect{j}})
           = d(\cX) + \sum_{\vect{j}} \beta_{\vect{j}}\eps_{\vect{j}},
  \end{aligned}\]
  hence $\sum_{\vect{j}}\beta_{\vect{j}}\eps_{\vect{j}}=0$.
  Thus,
  \[\begin{aligned}
    \sum_{\vect{j}} \enr(\cX_{\vect{j}})
    &= \sum_{\vect{j}} \alpha(\cX_{\vect{j}}) \cdot (d(\cX_{\vect{j}}))^2
    = \alpha(\cX) \sum_{\vect{j}} \beta_{\vect{j}} (d(\cX)+\eps_{\vect{j}})^2\\
    &= \alpha(\cX) \sum_{\vect{j}} \beta_{\vect{j}}
        ((d(\cX))^2+2d(\cX)\eps_{\vect{j}}+\eps^2_{\vect{j}})\\
    &= \alpha(\cX)(d(\cX))^2\sum_{\vect{j}}\beta_{\vect{j}}
       +2\alpha(\cX)d(\cX)\sum_{\vect{j}}\beta_{\vect{j}}\eps_{\vect{j}} 
       +\alpha(\cX)\sum_{\vect{j}}\beta_{\vect{j}}\eps_{\vect{j}}^2\\
    &= \enr(\cX) + 0
       +\alpha(\cX)\sum_{\vect{j}}\beta_{\vect{j}}\eps_{\vect{j}}^2,
  \end{aligned}\]
  as required.
\end{proof}
\noindent
The previous claim gives the following immediate corollary.

\begin{cor}\label{cor:improv}
  Let $H$ be an $r$-graph.
  Let $\cX=\{X_1,\dots,X_r\}$ be a family of pairwise disjoint subsets of $V$.
  Suppose $\cX$ is not $(\eps,p)$-regular;
  namely, there exists $\cX_{\vect{1}}=\{X_1^1,\dots,X_r^1\}$
  with $X_i^1\subseteq X_i$ and $|X_i^1|\ge \eps|X_i|$
  for every $i=1,\dots,r$,
  for which $|d^p(\cX_{\vect{1}})-d^p(\cX)|>\eps$.
  For $i \in [r]$, set $X_i^2:=X_i\sm X_i^1$,
  and for every $\vect{j}=(j_1,\dots,j_r)\in [2]^r$,
  set $\cX_{\vect{j}}:=\{X_1^{j_1},\dots,X_r^{j_r}\}$.
  Then
  \[
    \sum_{\vect{j}\in[2]^r} \enr(\cX_{\vect{j}})
    \ge \enr(\cX) + \alpha(\cX)\cdot\eps^{r+2}.
  \]
\end{cor}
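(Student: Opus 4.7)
The plan is to deduce Corollary~\ref{cor:improv} almost immediately from Lemma~\ref{lem:dev}, with the only real ``work'' being to isolate a single favourable term in the sum produced by that lemma.

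First, I would note that the partition described in the corollary is exactly the kind of refinement covered by Lemma~\ref{lem:dev}: each $X_i$ is split into the two parts $X_i^1$ and $X_i^2 = X_i \setminus X_i^1$, so $\ell_i = 2$ and the resulting cells are indexed by $\vect{j} \in [2]^r$. Applying Lemma~\ref{lem:dev} directly gives
\[
  \sum_{\vect{j} \in [2]^r} \enr(\cX_{\vect{j}}) = \enr(\cX) + \alpha(\cX) \sum_{\vect{j} \in [2]^r} \beta_{\vect{j}} \eps_{\vect{j}}^2,
\]
with $\beta_{\vect{j}}$ and $\eps_{\vect{j}}$ as defined there. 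So it suffices to prove $\sum_{\vect{j}} \beta_{\vect{j}} \eps_{\vect{j}}^2 \geq \eps^{r+2}$.

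The second step is to bound this sum from below by keeping only the single contribution from $\vect{j} = \vect{1} := (1,\dots,1)$, which is legitimate because every summand is non-negative. For this term, the non-regularity hypothesis gives $\eps_{\vect{1}}^2 = (d^p(\cX_{\vect{1}}) - d^p(\cX))^2 > \eps^2$. Moreover, since $|X_i^1| \geq \eps |X_i|$ for each $i \in [r]$, we have
\[
  \beta_{\vect{1}} = \frac{\pi(\cX_{\vect{1}})}{\pi(\cX)} = \prod_{i=1}^r \frac{|X_i^1|}{|X_i|} \geq \eps^r.
\]
Multiplying these bounds yields $\beta_{\vect{1}} \eps_{\vect{1}}^2 > \eps^{r+2}$, and hence $\sum_{\vect{j}} \beta_{\vect{j}} \eps_{\vect{j}}^2 \geq \eps^{r+2}$, completing the argument.

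There is no real obstacle here: the corollary is essentially a packaging of Lemma~\ref{lem:dev} tailored to witnessing a single irregular refinement, and the one-term lower bound is tight enough for the intended use (namely, driving the energy-increment argument in the proof of the regularity lemma). The only care needed is to observe that dropping all terms other than $\vect{j} = \vect{1}$ is valid precisely because each $\beta_{\vect{j}} \eps_{\vect{j}}^2$ is non-negative, which uses that squares are non-negative (the $\eps_{\vect{j}}$ themselves may be signed, as flagged in the footnote to Lemma~\ref{lem:dev}).
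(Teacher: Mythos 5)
Your argument is correct and is essentially identical to the paper's proof: both apply Lemma~\ref{lem:dev} to the $2^r$-cell refinement, discard all non-negative terms except $\vect{j}=\vect{1}$, and bound $\beta_{\vect{1}}\ge\eps^r$ and $\eps_{\vect{1}}^2>\eps^2$. Nothing more to add.
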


\begin{proof}
  This follows from Lemma~\ref{lem:dev}
  using the fact that $\alpha(\cX)$, $\beta_{\vect{j}}$ and $\eps_{\vect{j}}^2$ are all nonnegative,
  and noting that for $\vect{1}:=(1,\dots,1)$, it holds that
  $\beta_{\vect{1}}\ge\eps^r$
  and
  $\eps_{\vect{1}}^2>\eps^2$.
\end{proof}
\medskip

\noindent \textbf{Properties of $\enr^p_H(\cP)$ for upper-uniform hypergraphs.}
We assume from now on that $H$ is upper-uniform.
Firstly, we prove that then $\enr^p_H(\cP)$ is bounded.

\begin{lemma}\label{lem:bdd}
  If $H$ is an $n$-vertex $(\eta,p,D)$-upper-uniform $r$-graph
  and $\cP$ is a family of pairwise disjoint subsets of $V$ such that each set has size at least $\eta n$,
  then $\enr(\cP)\le D^2$.
\end{lemma}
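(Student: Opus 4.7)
The plan is to unfold the definition of $\enr(\cP)$, bound each term using upper-uniformity, and then factor out a constant factor so that what remains is exactly the sum handled by Lemma~\ref{lem:alpha}. In more detail, I would first write
\[
  \enr(\cP) \;=\; \sum_{\cX\in\binom{\cP}{r}} \enr(\cX) \;=\; \sum_{\cX\in\binom{\cP}{r}} \alpha(\cX)\cdot (d^p(\cX))^2\, ,
\]
so the task reduces to two independent bounds: one on $d^p(\cX)$ uniformly over $\cX$, and one on $\sum_\cX \alpha(\cX)$.

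For the first, fix any $\cX=\{X_1,\dots,X_r\}\in\binom{\cP}{r}$. The hypothesis on $\cP$ says that $|X_i|\ge \eta n$ for every $i\in[r]$, and the $X_i$ are pairwise disjoint (since they are distinct members of the pairwise-disjoint family $\cP$). The definition of $(\eta,p,D)$-upper-uniformity then applies directly to $\cX$ and gives $d(\cX)\le Dp$, i.e.\ $d^p(\cX)\le D$, and therefore $(d^p(\cX))^2\le D^2$.

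For the second, Lemma~\ref{lem:alpha} applied with $\cZ:=\cP$ yields
\[
  \sum_{\cX\in\binom{\cP}{r}} \alpha(\cX) \;\le\; \left(\frac{|\bigcup_{P\in\cP}P|}{|V|}\right)^{\!r} \;\le\; 1\, ,
\]
using that the members of $\cP$ are pairwise disjoint subsets of $V$. Combining the two bounds gives
\[
  \enr(\cP) \;\le\; D^2 \sum_{\cX\in\binom{\cP}{r}} \alpha(\cX) \;\le\; D^2\, ,
\]
as claimed. There is no real obstacle here: the argument is essentially bookkeeping, with the single substantive input being that upper-uniformity applies simultaneously to every $r$-tuple from $\cP$ (which is precisely why the size lower bound $\eta n$ on the parts is included in the hypothesis).
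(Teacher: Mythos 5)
Your proof is correct and takes exactly the same route as the paper's: unfold $\enr(\cP)$, bound each factor $(d^p(\cX))^2\le D^2$ via upper-uniformity (which applies since each $X_i\in\cP$ has size at least $\eta n$), and bound $\sum_{\cX\in\binom{\cP}{r}}\alpha(\cX)\le 1$ via Lemma~\ref{lem:alpha}. Nothing to add.
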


\begin{proof}
  The lemma follows from upper-uniformity and Lemma~\ref{lem:alpha} as 
  \[
    \enr(\cP) = \sum_{\cX\in\binom{\cP}{r}}\alpha(\cX) \cdot (d(\cX))^2 \le D^2 \cdot \sum_{\cX\in\binom{\cP}{r}}\alpha(\cX) \le D^2\, . \qedhere
  \]
\end{proof}

Secondly we prove that, as long as $H$ is an upper-uniform $r$-graph, for a family $\cX:=\{X_1,\dots,X_r\}$ of pairwise disjoint subsets of vertices, if $X_i^*$ is a large subset of $X_i$ for each $i \in [r]$ and $\cX^*:=\{X_1^*, \dots, X_r^*\}$, then $d(\cX)$ and $d(\cX^*)$ do not differ much.

\begin{lemma}\label{lem:lose}
  Let $0 \le \delta \le 1/2$ and $H$ be an $n$-vertex $(\eta, p, D)$-upper-uniform $r$-graph.
  Let $\cX=\{X_1,\dots,X_r\}$ be a family of pairwise disjoint subsets of $V$ with $\delta|X_i| \ge \eta n$ for each $i \in [r]$.
  Let $X_i^* \subseteq X_i$ with $|X_i^*| \ge (1-\delta)|X_i|$ for each $i \in [r]$, and define $\cX^*:=\{X_1^*, \dots, X_r^*\}$.
  Then $|d(\cX) - d(\cX^*)| \le D \delta r$ and 
  $|(d(\cX))^2-(d(\cX^*))^2| \le 2D^2\delta r$.
\end{lemma}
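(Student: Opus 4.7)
The plan is to reduce the density statement to an edge-count comparison, apply the $(\eta,p,D)$-upper-uniformity of $H$ termwise, and then translate back. Let $Y_i := X_i \setminus X_i^*$, so $|Y_i| \le \delta|X_i|$, and observe that every edge in $E(\cX)$ missing from $E(\cX^*)$ must have its $i$th vertex in $Y_i$ for some $i$. A union bound then yields
\[
    e_H(\cX) - e_H(\cX^*) \;\le\; \sum_{i=1}^{r} e_H(X_1, \dots, X_{i-1}, Y_i, X_{i+1}, \dots, X_r)\, .
\]

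I would bound each summand by $Dp\delta\pi(\cX)$. When $|Y_i| \ge \eta n$, upper-uniformity applies directly to give $e_H(X_1, \dots, Y_i, \dots, X_r) \le Dp \cdot |Y_i|\prod_{j\ne i}|X_j| \le Dp\delta\pi(\cX)$. When $|Y_i| < \eta n$, I would first enlarge $Y_i$ to a subset $Y_i' \subseteq X_i$ with $|Y_i'| \in [\eta n, \delta|X_i|]$, which the hypothesis $\delta|X_i| \ge \eta n$ makes possible, and then apply upper-uniformity to $Y_i'$ together with the edge-monotonicity $e_H(\dots,Y_i,\dots) \le e_H(\dots,Y_i',\dots)$. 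This dichotomy is really the only mildly technical point, and it is resolved entirely by the size hypothesis. Summing over $i$ gives $e_H(\cX) - e_H(\cX^*) \le rD\delta \, p\pi(\cX)$.

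To pass to densities, set $\rho := \pi(\cX^*)/\pi(\cX) = \prod_i |X_i^*|/|X_i|$, which lies in $[(1-\delta)^r, 1]$ and hence in $[1-r\delta, 1]$ by Bernoulli's inequality. Dividing the edge-count bound by $p\pi(\cX)$ and using $e_H(\cX^*)/(p\pi(\cX)) = \rho\, d(\cX^*)$, I obtain $0 \le d(\cX) - \rho\, d(\cX^*) \le rD\delta$. Since $|X_i^*| \ge (1-\delta)|X_i| \ge |X_i|/2 \ge \eta n$ (using $\delta \le 1/2$ and $|X_i| \ge \eta n/\delta$), upper-uniformity applied to $\cX^*$ gives $d(\cX^*) \le D$, whence $(1-\rho)\,d(\cX^*) \le rD\delta$. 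Decomposing $d(\cX) - d(\cX^*) = (d(\cX) - \rho\, d(\cX^*)) - (1-\rho)\,d(\cX^*)$ and bounding both terms then yields $|d(\cX) - d(\cX^*)| \le rD\delta$. Finally, factoring $(d(\cX))^2 - (d(\cX^*))^2 = (d(\cX) - d(\cX^*))(d(\cX) + d(\cX^*))$ and bounding the second factor by $2D$ gives the squared estimate $|(d(\cX))^2 - (d(\cX^*))^2| \le 2D^2 r\delta$, completing the argument.
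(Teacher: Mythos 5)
Your proof is correct and essentially matches the paper's: decompose the missing edges $E(\cX)\setminus E(\cX^*)$ into $r$ slabs according to which coordinate leaves $X_i^*$, bound each slab by upper-uniformity, and pass from edge counts to densities via the $\pi$-ratio, controlled by Bernoulli's inequality. The only noticeable difference is that the paper sidesteps your dichotomy on $|Y_i|$ by working with $X_i\setminus\tilde{X_i}$ for a set $\tilde{X_i}\subseteq X_i^*$ of size exactly $(1-\delta)|X_i|$, so the slab automatically has size $\delta|X_i|\ge\eta n$ and upper-uniformity applies directly, with no enlargement step needed.
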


\begin{proof}
  Observe that
  \[
    (1-\delta)^r d(\cX^*) = \frac{1}{p} \cdot \frac{(1-\delta)^r}{\pi(\cX^*)} \cdot e(\cX^*) \le \frac{1}{p} \cdot \frac{1}{\pi(\cX)} \cdot e(\cX^*) \le \frac{e(\cX)}{p \cdot \pi(\cX)} = d(\cX)\, .
  \]
  Using $(1-\delta)^r \ge 1-\delta r$ and the inequality above, we get that $d(\cX^*) - \delta r d(\cX^*) \le d(\cX)$, which can be rearranged as 
  \begin{equation}
  \label{eq:reg_eq_1}
    d(\cX^*) \le d(\cX) + \delta r d(\cX^*) \le d(\cX) + D \delta r\, ,
  \end{equation}
  where the last inequality uses that $H$ is upper-uniform.

  For each $i \in [r]$, take $\tilde{X_i} \subseteq X_i^*$ with $|\tilde{X_i}| = (1-\delta)|X_i|$, and define $\cZ_i:=\cX \cup \{X_i \setminus \tilde{X_i}\} \setminus \{X_i\}$.
  Then $e(\cX) \le e(\cX^*) + \sum_{i \in [r]} e(\cZ_i)$ and thus
  \begin{align*}
    d(\cX) &\le \frac{e(\cX^*)}{p \cdot \pi(\cX)} + \sum_{i \in [r]} \frac{e(\cZ_i)}{p \cdot \pi(\cX)} \\
    &\le \frac{e(\cX^*)}{p \cdot \pi(\cX^*)} + D \cdot \sum_{i \in [r]} \frac{|X_i \setminus \tilde{X_i}|}{|X_i|} = d(\cX^*) + D \delta r\, ,
  \end{align*}
  where the second inequality uses upper-uniformity.
  Together with~\eqref{eq:reg_eq_1}, we get $|d(\cX) - d(\cX^*)| \le D \delta r$.
  Therefore $|(d(\cX))^2 - (d(\cX^*))^2| = |d(\cX) - d(\cX^*)| \cdot |d(\cX) + d(\cX^*)| \le (D \delta r) \cdot (2D) \le 2 D^2 \delta r$, where $|d(\cX) + d(\cX^*)| \le 2D$ follows again from upper-uniformity. 
  This finishes the proof.
\end{proof}
\medskip

\noindent \textbf{Key lemma and proof of the regularity lemma.}
We are now ready to state and prove the key lemma in the proof of the regularity lemma.
As we are interested in a multicolour version of the regularity lemma, we need to extend the definition of $\enr^p$ as follows.
For a collection $\cH:=\{H_1,\dots,H_q\}$ of $q$ hypergraphs on the same vertex set $V$ and a family $\cP$ of pairwise disjoint subsets of $V$, we define
\[
  \enr^p_\cH(\cP) := \sum_{j=1}^q\enr^p_{H_j}(\cP).
\]

\begin{lemma}[Key lemma]\label{lem:key}
  Let $\eta  \ll 1/t \ll \eps, 1/D, 1/q, 1/r$ with $D,q,t,r \in \mathbb{N}$ and $D > 1$. 
  Then for every $p \in [0, 1]$ the following holds.  
  Let $H_1, \dots, H_q$ be $(\eta, p, D)$-upper-uniform $n$-vertex $r$-graphs on vertex set $V$. 
  Suppose that $\cP=\{V_1,\dots,V_t\}$
  is an equipartition of $V$ with $|V_i| \ge 8^{t^{r-1}}$ for every $i \in [t]$, 
  that is not $(\eps,p)$-regular
  with respect to $H_j$ for some $j \in [q]$.
  Then, there exists an equipartition $\cP'$ of $V$ with $t(4^{t^{r-1}}-2^{t^{r-1}})$ parts satisfying
  $\enr^p_\cH(\cP')\ge \enr^p_\cH(\cP)+\frac{\eps^{r+3}}{r^r \cdot 2^{2r+4}}$.
\end{lemma}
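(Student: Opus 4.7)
The plan is to extract many irregularity witnesses from the failure of $(\eps,p)$-regularity, refine $\cP$ into the atoms of the resulting Venn diagrams, and track the energy using Corollary~\ref{cor:improv} (for the gain) and Lemma~\ref{lem:dev} (for monotonicity under refinement). Suppose $\cP$ is not $(\eps,p)$-regular with respect to some $H_{j_0}$. Then at least $\eps\binom{t}{r}$ of the $r$-tuples $\cX=\{V_{i_1},\dots,V_{i_r}\}\in\binom{\cP}{r}$ are irregular for $H_{j_0}$; for each such $\cX$, I would fix witnesses $X_{i_k}^1\subseteq V_{i_k}$ with $|X_{i_k}^1|\ge\eps|V_{i_k}|$ and density defect exceeding $\eps$. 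For each part $V_i\in\cP$, I would collect the $\le\binom{t-1}{r-1}\le t^{r-1}$ witnesses of the form $X_i^1\subseteq V_i$ and partition $V_i$ into the atoms of the corresponding Venn diagram, yielding at most $a:=2^{t^{r-1}}$ atoms per $V_i$ and, globally, a refinement $\cQ$ of $\cP$.

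For the energy increment, Lemma~\ref{lem:dev} guarantees $\enr^p_{H_j}(\cQ)\ge\enr^p_{H_j}(\cP)$ for every $j$. For $j=j_0$, each irregular $\cX$ contributes, via Corollary~\ref{cor:improv} applied to its two-block witness refinement and Lemma~\ref{lem:dev} applied to the further refinement to atoms, a gain of at least $\alpha(\cX)\cdot\eps^{r+2}$ to $\enr^p_{H_{j_0}}$; since $\enr^p_{H_{j_0}}$ decomposes as a sum over $r$-tuples of $\cP$, these gains add up across different $\cX$. Using $\alpha(\cX)\ge(2t)^{-r}$ from \eqref{eq:equi:alpha} together with a routine lower bound on $\binom{t}{r}$, the total gain comes out as
\[
\enr^p_\cH(\cQ)-\enr^p_\cH(\cP)\;\ge\;\eps\binom{t}{r}\cdot(2t)^{-r}\cdot\eps^{r+2}\;\ge\;\frac{\eps^{r+3}}{r^r\cdot 2^{2r+3}},
\]
comfortably above the required increment, leaving a cushion for the next step.

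The remaining step is to convert $\cQ$ into an equipartition $\cP'$ with exactly $ta(a-1)=t(4^{t^{r-1}}-2^{t^{r-1}})$ parts. Within each $V_i$ I would split the atoms into blocks of the common size $m:=\lfloor|V_i|/(a(a-1))\rfloor$; the hypothesis $|V_i|\ge 8^{t^{r-1}}=a^3$ gives $m\ge a$, so the split is feasible, and the total leftover (strictly less than $am\le|V_i|/(a-1)$ inside each $V_i$) can be packaged into the missing blocks so that exactly $a(a-1)$ blocks appear in each $V_i$. Since $\cP$ is itself an equipartition and the block denominator $a(a-1)$ is the same for every $V_i$, the resulting block sizes agree up to $1$, making $\cP'$ an equipartition of $V$. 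Insofar as this construction is a refinement of $\cQ$ within each $V_i$, Lemma~\ref{lem:dev} immediately yields $\enr^p_\cH(\cP')\ge\enr^p_\cH(\cQ)$; any unavoidable merges across atom boundaries involve at most a $1/(a-1)$ fraction of each $V_i$, and Lemma~\ref{lem:lose} with $\delta=O(1/a)$ bounds the induced loss by $O(D^2r/a)$, which is doubly exponentially smaller than the gain and easily absorbed into the cushion between $2^{2r+3}$ and $2^{2r+4}$ in the denominators. \textbf{The main obstacle} I expect is exactly this final bookkeeping—hitting the prescribed count $ta(a-1)$ while preserving the equipartition condition and, up to negligible error, the refinement structure of $\cQ$; the energy arithmetic itself is a clean consequence of Corollary~\ref{cor:improv} and the additivity provided by Lemma~\ref{lem:dev}.
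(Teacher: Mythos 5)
Your overall strategy matches the paper's: extract witnesses from the irregular $r$-tuples, refine each $V_i$ into the atoms of the resulting Venn diagram, harvest the energy gain via Corollary~\ref{cor:improv} and Lemma~\ref{lem:dev}, and then equalize. The gain estimate $\enr_\cH(\cQ)\ge\enr_\cH(\cP)+\eps\binom{t}{r}(2t)^{-r}\eps^{r+2}$ is correct (indeed it is the paper's computation). The gap is in the equalization step.

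You write that $\cP'$ is obtained by chopping each atom into blocks of size $m$ and ``packaging'' the leftovers into the remaining blocks, and then assert that ``Lemma~\ref{lem:dev} immediately yields $\enr_\cH(\cP')\ge\enr_\cH(\cQ)$.'' This is false: a ``packaged leftover'' block gathers remainders from several different atoms, hence is contained in no atom, so $\cP'$ is \emph{not} a refinement of $\cQ$ (it is neither a refinement nor a coarsening), and Lemma~\ref{lem:dev} simply does not apply in the direction you want. Your fallback to Lemma~\ref{lem:lose} does not repair this, because Lemma~\ref{lem:lose} compares a tuple $\cX$ with a tuple of \emph{subsets} $\cX^*\subseteq\cX$; it says nothing about the energy difference between two partitions that are not nested. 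A second, smaller, issue is that the gain per irregular tuple cannot be taken at strength $\eps^{r+2}$ once you pass to blocks that only approximately recover the witness sets: you only retain an $\eps/2$ (say) density defect, so the gain must be computed with a shrunken parameter.

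The paper's resolution is precisely designed to dodge both problems. One chooses blocks $W_{ih}$ of uniform size $d$ that lie entirely inside atoms, lets $\tilde V_i:=\bigcup_h W_{ih}$, and works with the intermediate family $\tilde\cP=\{\tilde V_i\}$. Then (i) $\enr_\cH(\cP)\le\enr_\cH(\tilde\cP)+\text{(tiny)}$ by Lemma~\ref{lem:lose}, since $\tilde V_i\subseteq V_i$ with $|V_i\setminus\tilde V_i|/|V_i|$ doubly exponentially small; (ii) the blocks $\{W_{ih}\}$ genuinely refine both $\tilde\cP$ and the restricted witnesses $\tilde V_i^{I,1}=\bigcup\{W_{ih}:W_{ih}\subseteq V_i^{I,1}\}$, and Lemma~\ref{lem:lose} shows the density defect survives at strength $>\eps/2$ inside $\tilde\cP$, so Corollary~\ref{cor:improv} together with Lemma~\ref{lem:dev} gives $\enr_\cH(\{W_{ih}\})\ge\enr_\cH(\tilde\cP)+\text{(gain)}$; and (iii) the leftover vertices are distributed \emph{into} the existing blocks $W_{ih}$ (each block is augmented by a $\delta$-fraction of its size, not replaced by new garbage blocks), so each resulting block contains the corresponding $W_{ih}$, and the density of every $r$-tuple changes by a controlled factor $(1+\delta)^{-r}$, giving $\enr_\cH(\cP'')\ge(1+\delta)^{-2r}\enr_\cH(\cP')$. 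The crucial structural feature your sketch lacks is that every block of the final partition should contain a ``good'' block as a subset; creating new blocks entirely from leftovers destroys this and leaves you without any tool to compare the two partitions. If you instead split $\enr_\cH(\cP')$ into terms using only good blocks (bounded below via $\tilde\cP$ as above) and terms involving a garbage block (nonnegative, so discardable), your approach can be salvaged, but it then becomes the paper's argument with $\tilde\cP$ made explicit.
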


\begin{proof}
  Let $m:=\lfloor n/t \rfloor$ and note that $|V_i| \in \{m,m+1\}$ for each $i \in [t]$.  
  For $I=\{i_1,\dots,i_r\} \in \binom{[t]}{r}$, denote $\cP_{I}:=\{V_{i_1},\dots,V_{i_r}\}$.
  Assume without loss of generality that $\cP$ is not $(\eps,p)$-regular
  with respect to $H_1$.
  Let $\cI\subseteq\binom{[t]}{r}$ be the set of $r$-subsets $I$ of $[t]$
  for which $\cP_{I}$ is not $(\eps,p)$-regular with respect to $H_1$.
  Thus, $|\cI|\ge\eps \binom{t}{r} \ge \eps \cdot (t/r)^r$.
  Consider any $I\in \cI$.
  Then 
  there exist $V_i^{I,1}\subseteq V_i$, $i \in I$,
  such that $|V_i^{I,1}|\ge\eps|V_i|$
  and $|d_{H_1}(\cP_{I})-d_{H_1}(\cP_{I}^{\vect{1}})|>\eps$,
  where $\cP_{I}^{\vect{1}}=\{V_i^{I,1}\}_{i\in I}$.
  Let us also set $V_i^{I,2}:=V_i\sm V_i^{I,1}$.
  
  Fix any $i \in [t]$ and consider the atoms
  of the Venn diagram of the sets
  $$\left\{V_i^{I,x}\ :\ i\in I\in \cI,\ x\in\{1,2\}\right\}\, .$$
  Since $i$ is contained in at most $\binom{t-1}{r-1}$ sets $I \in \cI$,
  we deduce that $V_i$ is refined into at most $2^{\binom{t-1}{r-1}} \le 2^{t^{r-1}}$ atoms.
  Set $\ell:=4^{t^{r-1}}-2^{t^{r-1}}$ and $d=\lfloor m/4^{t^{r-1}} \rfloor$.
  Observe that $d+1 \ge 2^{t^{r-1}}$ (by the assumption on the size of $V_i$) and $|V_i| \le m+1 \le 4^{t^{r-1}}(d+2)$.
  Let $W_{ih} \subseteq V_i$, $1 \le h \le \ell$, be pairwise disjoint sets of size $d$ such that each $W_{ih}$ is contained in some atom. This is possible since all but at most $d-1$ elements of each atom can be partitioned into sets of size $d$, and
  \[
    \ell d+2^{t^{r-1}}(d-1) \le m \le |V_i|\, .
  \]  
  Set $\tilde{V}_i:=\cup_{h=1}^{\ell}W_{ih}$. Also, for each $I \in \cI$ with $i \in I$ and for each $x \in \{1,2\}$, set 
  $$
  \tilde{V}_i^{I,x}:=\bigcup \{W_{ih}: W_{ih} \subseteq V_i^{I,x}\}\, .
  $$
  Finally, set $\tilde{\cP}:=\{\tilde{V}_i: i \in [t]\}$ and $\cP':=\{W_{ih}: i \in [t], h \in [\ell]\}$.
  We will compare the energies of $\cP$ and $\cP'$ by comparing both with $\tilde{\mathcal{P}}$.
  
  Setting $\delta:=\frac{\eps^{r+3}}{2^{4r+5}D^2 qr^{r+1}}$, we start by observing that
  \begin{equation}
  \label{eq:size_V_i}
    \frac{|V_i \setminus \tilde{V}_i|}{|V_i|} = 
    \frac{|V_i| - \ell d}{|V_i|} 
    \le 1 - \frac{\ell d}{4^{t^{r-1}} (d+2)} \le \frac{2}{d+2}+\frac{1}{2^{t^{r-1}}} \le \frac{3}{2^{t^{r-1}}} \le \delta \, ,
  \end{equation}
  where the last inequality holds as $t$ is large enough as a function of $\varepsilon,r,q,D$.
  So we see that $|\tilde{V}_i| \ge (1-\delta)|V_i|$. In addition,
  $\delta |V_i| \ge \eta n$, as $\eta$ is small enough as a function of $t$.
  Now, fix any $I \in \binom{[t]}{r}$, 
  and set $\tilde{\mathcal{P}}_I := \{\tilde{V}_i : i \in I\}$ (recall also that $\mathcal{P}_I = \{V_i : i \in I\}$).
  By Lemma~\ref{lem:lose}, for each $j \in [q]$ we have
  \[ 
    |d_{H_j}(\cP_I)-d_{H_j}(\tilde{\cP}_I)| \le D \delta r\, 
  \]
  and 
  \[ 
    |(d_{H_j}(\cP_I))^2-(d_{H_j}(\tilde{\cP}_I))^2| \le 2D^2\delta r.
  \]
  Moreover, using $|\tilde{V}_i| \ge (1-\delta)|V_i|$ again, we have $|V_i|/|\tilde{V}_i| \le \frac{1}{1-\delta} \leq 1+2\delta$.
  For $\cX:=\{V_{i_1},\dots,V_{i_r}\} \in \binom{\cP}{r}$, define $\tilde{\cX}:=\{\tilde{V}_{i_1},\dots,\tilde{V}_{i_r}\}$.
  It follows that $\alpha(\cX) \le (1+2\delta)^r \alpha(\tilde{\cX})$ and thus for each $j \in [q]$,
  \begin{align*}
    \enr_{H_j}(\cP) &= \sum_{\cX \in \binom{\cP}{r}} \alpha(\cX) \left(d_{H_j} (\cX)\right)^2 \le 2D^2 \delta r + \sum_{\cX \in \binom{\cP}{r}} \alpha(\cX) \left(d_{H_j}(\tilde{\cX})\right)^2 \\
    & \le 2D^2 \delta r+ (1+2\delta)^r \sum_{\cX \in \binom{\cP}{r}} \alpha(\tilde{\cX}) \left(d_{H_j}(\tilde{\cX})\right)^2 \\
    & =(1+2\delta)^r \enr_{H_j}(\tilde{\cP}) + 2D^2\delta r \le \enr_{H_j}(\tilde{\cP}) + 4^r D^2 \delta+ 2D^2\delta r\\
    &\le \enr_{H_j}(\tilde{\cP}) + 4^{r+1} \delta D^2 r\, ,
  \end{align*}
  where we used $(1+2\delta)^r \le 1+4^r \delta$ and $\enr_{H_j}(\tilde{\cP}) \le D^2$ (which holds by Lemma~\ref{lem:bdd}) for the penultimate inequality.
  Summing over all $j \in [q]$, we get
  \begin{equation}
  \label{eq:partition}
    \sum_{j=1}^q \enr_{H_j}(\cP) \le \sum_{j=1}^q \enr_{H_j}(\tilde{\cP}) + q \cdot 4^{r+1} \delta D^2 r = \enr(\tilde{\cP}) + r^{-r} \cdot 2^{-2r-3} \cdot \eps^{r+3}\, ,
  \end{equation}
  where the last equality uses the definition of $\delta$.

  Let $I \in \cI$ and $i \in I$. Then, using $|V_i^{I,1}| \ge \eps |V_i|$ and~\eqref{eq:size_V_i},
  \begin{equation}
  \label{eq:difference_in_size}
    \frac{|V_i^{I,1} \setminus \tilde{V}_i^{I,1}|}{|V_i^{I,1}|}
    \le \frac{|V_i \setminus \tilde{V}_i|}{|V_i^{I,1}|}
    = \frac{|V_i \setminus \tilde{V}_i|}{|V_i|} \cdot \frac{|V_i|}{|V_i^{I,1}|}
    \le \delta/\eps\, .
  \end{equation}
  It follows from Lemma~\ref{lem:lose} (with parameter $\delta/\eps$ in place of $\delta$) that for each $j \in [q]$,
  \[
    |d_{H_j}(\cP_I^\vect{1})-d_{H_j}(\tilde{\cP}_I^\vect{1})| \le D \delta r/\eps\, ,
  \]
  where 
  $\tilde{\mathcal{P}}_I^{\vect{1}} := 
  \{\tilde{V}_i^{I,1} : i \in I\}$.
  In particular, for $j=1$, and since $I \in \cI$,
  \begin{align}
  \label{eq:irregular}
    |d_{H_1}(\tilde{\cP}_I^\vect{1})-d_{H_1}(\tilde{\cP}_I)| &\ge |d_{H_1}(\cP_I^\vect{1})-d_{H_1}(\cP_I)| - |d_{H_1}(\tilde{\cP}_I^\vect{1})-d_{H_1}(\cP_I^\vect{1})| - |d_{H_1}(\tilde{\cP}_I)-d_{H_1}(\cP_I)| \nonumber \\
    & > \eps -  D \delta r/\eps - D \delta r > \eps - 2D \delta r/\eps > \eps/2\, ,
  \end{align}
  where the last inequality follows from the choice of $\delta$.

  Observe that $\cP'$ is a refinement of $\tilde{\cP}$.
  Moreover, for each $I \in \cI$, since $|V_i^{I,1}| \ge \eps |V_i|$ and by~\eqref{eq:difference_in_size}, we have $|\tilde{V}_i^{I,1}| \ge (1-\delta/\eps) |V_i^{I,1}| \ge \eps |V_i|/2$.
  Together with~\eqref{eq:irregular}, we conclude that $\tilde{\cP}_I$ is not an $(\varepsilon/2,p)$-regular tuple within the partition $\tilde{\cP}$ for the graph $H_1$, as witnessed by the sets $(\tilde{V}_i^{I,1})_{i \in I}$.
  Now, by Lemma~\ref{lem:dev}, Corollary~\ref{cor:improv} and~\eqref{eq:equi:alpha}, we have
  \begin{align*}
    \enr_{\mathcal{H}}(\cP') &= \enr_{H_1}(\cP') + \sum_{j=2}^q \enr_{H_j}(\cP') \ge \sum_{j=1}^q\enr_{H_j}(\tilde{\cP}) + \sum_{I \in \cI} \alpha(\tilde{\cP}_I) \cdot (\eps/2)^{r+2} \\
    & \ge \enr_{\mathcal{H}}(\tilde{\cP}) + \eps (t/r)^r \cdot (2t)^{-r} \cdot (\eps/2)^{r+2} =  \enr_{\cH}(\tilde{\cP}) + r^{-r} \cdot 2^{-2r-2} \cdot \eps^{r+3}\, .
  \end{align*}
  Together with~\eqref{eq:partition}, we conclude that
  \begin{align*}
    \enr_{\mathcal{H}}(\cP') & \ge 
    \enr_{\mathcal{H}}(\tilde{\cP}) + 
    \frac{\eps^{r+3}}{r^r \cdot 2^{2r+2}}  \ge 
    \enr_{\mathcal{H}}(\cP) - \frac{\eps^{r+3}}{r^r \cdot 2^{2r+3}}+ \frac{\eps^{r+3}}{r^r \cdot 2^{2r+2}} =  \enr_{\mathcal{H}}(\cP) + \frac{\eps^{r+3}}{r^r \cdot 2^{2r+3}} \, .
  \end{align*}

  The collection $\cP'$ has order $t\ell$ and each set has size $d$.
  Moreover, there are at most $t \cdot 2^{\binom{t-1}{r-1}} \cdot d$ vertices of $V$ not covered by $\cP'$.
  We let $\cP''$ be the equipartition of $V$ obtained from $\cP'$ by distributing all these vertices among the parts of $\cP'$ as evenly as possible.
  Note that $\cP''$ still has $t\ell$ parts. 
  We are left to bound $\enr_{\cH}(\cP'')$.

  Observe that each set of $\cP'$ gets at most 
  $\frac{t \cdot 2^{\binom{t-1}{r-1}} \cdot d}{t \cdot \ell} \leq 
  \frac{2^{t^{r-1}}}{4^{t^{r-1}} - 2^{t^{r-1}}} \cdot d \le \delta d$ new vertices, where we used the choice of $\ell$ and that $t$ is sufficiently large.
  Then if $C'_1, \dots, C'_r$ are sets in $\cP'$ and $C''_1, \dots, C''_r$ are the corresponding sets in $\cP''$, we have for each $j=1, \dots, q$
  \begin{align*}
    d_{H_j}(C''_1,\dots,C''_r) = \frac{e_{H_j}(C''_1,\dots,C''_r)}{|C''_1| \cdot \dots \cdot |C''_r|} &\ge \frac{e_{H_j}(C'_1,\dots,C'_r)}{|C'_1| \cdot \dots \cdot |C'_r|} \cdot \frac{|C'_1| \cdot \dots \cdot |C'_r|}{|C''_1| \cdot \dots \cdot |C''_r|} \\
    & \ge d_{H_j}(C'_1,\dots,C'_r) \cdot (1+\delta)^{-r}\, .
  \end{align*}
  Therefore
  $$
    \enr_{\cH}(\cP'') \ge (1+\delta)^{-2r} \cdot \enr_{\cH}(\cP') \ge \enr_{\cH}(\cP') - 2r\delta q D^2\, ,
  $$
  where we used $(1+\delta)^{-2r} \ge (1-\delta)^{2r} \ge 1-2r\delta$ and $\enr_{H_j}(\tilde{\cP'}) \le D^2$ for each $j \in [q]$ (which holds by Lemma~\ref{lem:bdd}).
  We conclude that
  \[
    \enr_{\cH}(\cP'') \ge \enr_{\cH}(\cP') - 2r\delta q D^2 \ge \enr_{\cH}(\cP) + \frac{\eps^{r+3}}{r^r \cdot 2^{2r+3}} - 2r\delta q D^2 \ge \enr_{\cH}(\cP) + \frac{\eps^{r+3}}{r^r \cdot 2^{2r+4}} \, ,
  \]
  where the last inequality holds by the choice of $\delta$. Thus, $\cP''$ is the desired equipartition.
\end{proof}

\noindent
Finally, we can complete the proof of the multicolour weak sparse hypergraph regularity lemma.

\lateproof{Lemma~\ref{lem:reg:clr}} 
    Let $t_0$ be large enough as a function of $\eps, D,q,t,r$ so that Lemma~\ref{lem:key} is applicable for every $t \geq t_0$.
    For $i \ge 0$, define $t_{i+1}:=t_i(4^{t_i^{r-1}}-2^{t_i^{r-1}})$.
    Finally set $s:=\frac{r^r \cdot 2^{2r+4}}{\eps^{r+3}}\cdot qD^2$, $T:=t_s \cdot 8^{t_s^r-1}$ and let $\eta$ be small enough for Lemma~\ref{lem:key} to be applicable for every $t \leq t_s$.

    We can assume $n \ge T$ as, otherwise, the trivial partition into singletons satisfies the lemma.
    We start from an arbitrary equipartition $\cP_0$ of $V$ of order $t_0$, and repeatedly apply Lemma~\ref{lem:key} until we get an equipartition which is $(\eps,p)$-regular with respect to $H_j$ for every $j \in [q]$. 
    We will briefly show that the number of steps in the process is at most $s$, and thus, by the guarantees of Lemma~\ref{lem:key}, the size of the final partition is at most $t_s \le T$. As already mention, the condition in Lemma~\ref{lem:key} that $\eta$ is small enough in terms of $t$ holds in all of our applications of Lemma~\ref{lem:key}. Also, each part of each partition has size at least $\lfloor n/t_s \rfloor \geq 8^{t_s^{r-1}}$, given our assumption on $n$. Thus, we may indeed apply Lemma~\ref{lem:key}. 
    
    By Lemma~\ref{lem:bdd} and since $\eta \ll 1/T$, we have that all the equipartitions $\cP$ obtained above satisfy $0 \le \enr_{\cH}(\cP) \le qD^2$. Since each application of Lemma~\ref{lem:key} increases $\enr_{\cH}(\cdot)$ by at least $\frac{\eps^{r+3}}{r^r \cdot 2^{2r+4}}$, this process must stop after at most $s$ iterations, as claimed.
\endproof

\end{document}